\documentclass[11pt]{article}

\usepackage{amssymb,amsmath,euscript,bbm,xcolor,graphicx,epstopdf}
\usepackage{verbatim} 
\setlength{\oddsidemargin}{.1in} \setlength{\evensidemargin}{.1in}
\setlength{\textwidth}{6.2in} \setlength{\textheight}{9.0in}
\setlength{\topmargin}{-0.5in} \setlength{\footskip}{1cm}

\newtheorem{proposition}{Proposition}[section]
\newtheorem{lemma}[proposition]{Lemma}
\newtheorem{theorem}[proposition]{Theorem}

\newtheorem{corollary}[proposition]{Corollary}

\def\la{\lambda}
\def\La{\Lambda}
\def\ep{\varepsilon}

\def\l{{\langle}}
\def\r{\rangle}

\def\R{{\mathbb R}}
\def\S{{\mathbb S}}

\def\E{{\mathbb E}}
\def\P{{\mathbb P}}

\newcommand{\GOI}{{\rm GOI}}
\newcommand{\GOE}{{\rm GOE}}

%
%
%
%
\makeatletter \@addtoreset{equation}{section} \makeatother
%
%

%
%
%
%
%
\newcommand {\qed}%
{%
    {}\hfill
    {}\hfill
    {$\square $}%
    \vspace {0.3cm}%
    \pagebreak [2]%
    \par
}%
%
%
%
%
%
%
\newenvironment{proof}[1]{%
    \vspace{0.3cm}%
    \pagebreak [2]%
    \par%
    \noindent {\bf  Proof~#1\ }}{\qed}%
%
%
%
\newenvironment{example}{%
    \vspace{0.3cm} \pagebreak [2]%
    \par%
    \refstepcounter{proposition}%
    \noindent%
    {\bf  Example~\theproposition\ }}{}%
%
%
%
%
%
\newenvironment{remark}{%
    \vspace{0.3cm} \pagebreak [2]%
    \par%
    \refstepcounter{proposition}
    \noindent%
    {\bf Remark~\theproposition\  }}{}%
%
%
%
%
%
%
%

\begin{document}

\title {Expected Number and Height Distribution of Critical Points of Smooth Isotropic Gaussian Random Fields \thanks{Research partially
supported by NIH grant R01-CA157528.}}
\author{Dan Cheng\\ Texas Tech University
 \and Armin Schwartzman \\ University of California San Diego}

\maketitle

\begin{abstract}
We obtain formulae for the expected number and height distribution of critical points of smooth isotropic Gaussian random fields parameterized on Euclidean space or spheres of arbitrary dimension. The results hold in general in the sense that there are no restrictions on the covariance function of the field except for smoothness and isotropy. The results are based on a characterization of the distribution of the Hessian of the Gaussian field by means of the family of Gaussian orthogonally invariant (GOI) matrices, of which the Gaussian orthogonal ensemble (GOE) is a special case. The obtained formulae depend on the covariance function only through a single parameter (Euclidean space) or two parameters (sphere), and include the special boundary case of random Laplacian eigenfunctions.
\end{abstract}

\noindent{\small{\bf Keywords}: Gaussian random fields; Isotropic; Critical points; Height density; Random matrices; GOI; GOE; Boundary; Kac-Rice formula; Sphere.}

\noindent{\small{\bf Mathematics Subject Classification}:\ 60G15, 60G60, 15B52.}

\section{Introduction}
Computing the expected number of critical points of smooth Gaussian random fields is an important problem in probability theory \cite{Adler81,AT07,CL67} and has extensive applications in various areas such as physics \cite{Annibale:2003,Cavagna:1997,Crisanti:2004,Halperin:1966}, statistics \cite{ATW12,CL67,Lindgren72,CS16,CS16sphere}, neuroimaging \cite{Nichols:2003,Taylor:2007,Worsley:1996b,Worsley:2004}, oceanography \cite{L-H60,Lindgren82} and astronomy \cite{AstrophysJ04,AstrophysJ85}. Many researchers from different areas have worked on this problem and created certain powerful tools, including the famous Kac-Rice formula \cite{Rice:1945,AT07}. Although one can use the Kac-Rice formula to find an implicit formula for the expected number of critical points, it remains difficult to evaluate the expectation explicitly for most smooth Gaussian random fields defined on Euclidean space $\R^N$ or the $N$-dimensional unit sphere $\S^N$ when $N>1$.

An exciting breakthrough was made by Fyodorov \cite{Fyodorov04}, making the explicit evaluation available for a large class of isotropic Gaussian random fields. The main novel idea was to write the Hessian of the Gaussian field as a Gaussian random matrix involving the Gaussian Orthogonal Ensemble (GOE). This has lead to many important applications and further developments \cite{Bray07,Fyodorov12,Fyodorov07,Fyodorov15}, including the study of critical points of spin glasses \cite{AuffingerPhD, Auffinger:2013a,Auffinger:2013b,Fyodorov14}, which is related to Gaussian random fields on $\S^N$.

However, the result of Fyodorov \cite{Fyodorov04} and its existing further developments do not apply to all isotropic fields but are in fact restricted to the class of fields for which the algebraic form of the covariance function of the Gaussian field needs to give a valid covariance function in $\R^N$ or $\S^N$ for every $N\ge 1$ \cite{AzaisW08,CS15}. This restriction appears naturally in physics when one is interested in studying the asymptotics as $N\to \infty$, but it becomes very limiting in recent applications in statistics \cite{CS16,CS16sphere} and astronomy \cite{CMW}, where the interested objects are Gaussian random fields defined on $\R^{N_0}$ or $\S^{N_0}$ for some specific $N_0$. Fyodorov's restriction ignores those covariance functions whose forms are valid in $\R^{N_0}$ ($\S^{N_0}$ respectively) but invalid in $\R^N$ ($\S^N$ respectively) when $N>N_0$, which correspond to a large class of Gaussian random fields.
Motivated by these real applications and the completeness of the theory as well, we show in this paper that the restriction can be in fact removed, so that the explicit evaluation of the expected number of critical points is available for general isotropic Gaussian random fields.

More specifically, let $X=\{X(t), t\in T\}$ be a centered, unit-variance, smooth isotropic Gaussian random field, where $T$ is $\R^N$ or $\S^N$. Let
\begin{equation}\label{Eq:mu-i}
\mu_i(X, u) = \# \left\{ t\in D: X(t)\geq u, \nabla X(t)=0, \text{index} (\nabla^2 X(t))=i \right\}, \quad 0\le i\le N,
\end{equation}
where $D$ is an $N$-dimensional unit-area disc on $T$, $\nabla X(t)$ and $\nabla^2 X(t)$ are respectively the gradient and Hessian of $X$, and $\text{index} (\nabla^2 X(t))$ denotes the number of negative eigenvalues of $\nabla^2 X(t)$. That is, $\mu_i(X, u)$ is the number of critical points of index $i$ of $X$ exceeding $u$ over the unit-area disc $D$. Our first goal is to compute $\E[\mu_i(X, u)]$ for general smooth isotropic Gaussian fields, especially to remove the restriction in \cite{Fyodorov04}. The main tool is still the Kac-Rice formula. However, by investigating the covariance structure of general isotropic Gaussian fields, we find that the problem can be solved by writing the Hessian as a class of Gaussian random matrices called Gaussian Orthogonally Invariant (GOI) matrices \cite{SMT08} or isotropic matrices \cite{Chevillard}. This is the class of matrices $M$ whose distribution is invariant under all transformations of the form $QMQ^T$ with orthogonal $Q$. This class of Gaussian random matrices, originally introduced by Mallows \cite{Mallows61}, extends GOE matrices in the sense that GOE is a special case when the diagonal entries are independent. The additional dependence in GOI is captured by a covariance parameter $c \ge -1/N$.

Using this construction (see Lemmas \ref{Lem:GOE for det Hessian} and \ref{Lem:GOE for det Hessian sphere}), we can write the Kac-Rice integral in terms of the density of the ordered GOI eigenvalues and obtain general implicit computable formulae of $\E[\mu_i(X, u)]$ (see Theorems \ref{Thm:critical-points-nondegenerate-euclidean}, \ref{Thm:critical-points-degenerate-euclidean}, \ref{Thm:critical-points-nondegenerate-sphere} and \ref{Thm:critical-points-degenerate-sphere}). Because of the isotropy assumption, the obtained formulae depend on the covariance function only through its first and second derivatives at zero. Explicit calculations are shown for isotropic Gaussian fields on $\R^2$ and $\S^2$.

As we shall see here, Fyodorov's construction corresponds to the subset of GOI matrices corresponding to $c \ge 0$, see \eqref{Eq:c-positive}. The resulting restriction can be characterized by the covariances of the first and second derivatives of the field and it reduces to simple constraints on a single parameter $\kappa$ in the case of $\R^N$ ($0 < \kappa^2 \le 1$) and two parameters $\eta$ and $\kappa$ in the case of $\S^N$ ($\kappa^2 - \eta^2 \le 1$) (see Sections \ref{sec:restriction-Euclidean} and \ref{sec:restriction-sphere}). By removing this restriction, we are able to obtain results for the entire range of these parameters ($0 < \kappa^2 \le (N + 2)/N$ and $\kappa^2 - \eta^2 \le (N + 2)/N$). 

Of special interest is the case of random eigenfunctions of the Laplace operator, obtained at the boundary of the parameter space ($\kappa^2 = (N + 2)/N$ and $\kappa^2 - \eta^2 = (N + 2)/N$). These random fields satisfy the Helmholtz partial differential equation and have a degenerate covariance between the field and its Hessian. On the sphere, these become random spherical harmonics, which have been widely studied \cite{CM,CMW,Wigman:2009} due to applications in physics and astronomy. We obtain results for this boundary case as well using a different technique involving the Helmholtz equation.

The second goal of this paper is to obtain the height distribution of critical points. Define the height distribution of a critical value of index $i$ of $X$ at some point, say $t_0$, as
\begin{equation}\label{Eq:F}
F_i(u) :=\lim_{\ep\to 0} \P\left\{X(t_0)>u | \exists \text{ a critical point of index $i$ of } X(t) \text{ in } B(t_0, \ep) \right\},
\end{equation}
where $B(t_0, \ep)$ is the geodesic ball on $T$ of radius $\ep$ centered at $t_0$. Such distribution has been of interest for describing fluctuations of the cosmic background in astronomy \cite{AstrophysJ85,AstrophysJ04} and describing the height of sea waves in oceanography \cite{L-H52,L-H80,Lindgren82,Sobey92}. It has also been found to be an important tool for computing p-values in peak detection and thresholding problems in statistics \cite{SGA11,CS16sphere,CS16} and neuroimaging \cite{Chumbley:2009,Poline:1997,Worsley:1996b,Worsley:1996a}. The height distribution of local maxima of Gaussian random fields has been studied under Fyodorov's restriction in \cite{CS15}. For general critical points of index $i$, it follows from similar arguments in \cite{CS15} that,
\begin{equation}\label{Eq:F-ratio}
F_i(u) = \frac{\E[\mu_i(X, u)]}{\E[\mu_i(X)]},
\end{equation}
where $\mu_i(X) = \mu_i(X, -\infty)$. Therefore, $F_i(u)$ can be obtained immediately once the form of $\E[\mu_i(X, u)]$ is known. In fact, due to the ratio in \eqref{Eq:F-ratio}, the form of $F_i(u)$ is actually simpler, depending only on the single parameter $\kappa$ in the case of $\R^N$  (see Corollaries \ref{Cor:HeightDist-Euclidean-Nondegenerate} and \ref{Cor:HeightDist-Euclidean-Degenerate}) but the two parameters $\eta$ and $\kappa$ in the case of $\S^N$ (see Corollaries \ref{Cor:HeightDist-sphere-Nondegenerate} and \ref{Cor:HeightDist-sphere-Degenerate}).

This article is organized as follows. We first investigate in Section \ref{sec:GOI} the GOI matrices, especially the density of their ordered eigenvalues. In Section \ref{sec:isotropic-Euclidean}, we study the expected number and height distribution of critical points of isotropic Gaussian fields on Euclidean space, including the case of random Laplacian eigenfunctions, and the comparison with Fyodorov's restricted case. These studies are then extended in a parallel fashion to Gaussian fields on spheres in Section \ref{sec:isotropic-sphere}. We consider some interesting open problems for future work in Section \ref{sec:discussion}.

\section{Gaussian Orthogonally Invariant (GOI) Matrices}\label{sec:GOI}
In this section, we study a class of Gaussian random matrices called GOI, extending the well-known GOE matrices in the sense that it contains all random matrices whose distributions, like the GOE, are invariant under orthogonal transformations. We shall see in Sections \ref{sec:isotropic-Euclidean} and \ref{sec:isotropic-sphere} that the computation of the expected number of critical points of isotropic Gaussian fields can be transformed to the distribution of the ordered eigenvalues of such random matrices.

\subsection{Characterization of GOI matrices}
Recall that an $N\times N$ random matrix $H=(H_{ij})_{1\le i,j\le N}$ is said to have the \emph{Gaussian Orthogonal Ensemble} (GOE) distribution if it is symmetric and all entries are centered Gaussian variables such that
\begin{equation*}
\E[H_{ij}H_{kl}] = \frac{1}{2}(\delta_{ik}\delta_{jl} + \delta_{il}\delta_{jk}),
\end{equation*}
where $\delta_{ij}$ is the Kronecker delta function. It is well known that the GOE matrix $H$ is \emph{orthogonally invariant}, i.e., the distribution of $H$ is the same as that of $QHQ^T$ for any $N\times N$ orthogonal matrix $Q$. Moreover, the entries $(H_{ij}, 1\le i\le j\le N)$ are independent.

We call an $N\times N$ random matrix $M=(M_{ij})_{1\le i,j\le N}$ \emph{Gaussian Orthogonally Invariant} (GOI) with \emph{covariance parameter} $c$, denoted by $\GOI (c)$, if it is symmetric and all entries are centered Gaussian variables such that
\begin{equation}\label{eq:OI}
\E[M_{ij}M_{kl}] = \frac{1}{2}(\delta_{ik}\delta_{jl} + \delta_{il}\delta_{jk}) + c\delta_{ij}\delta_{kl}.
\end{equation}
Mallows \cite{Mallows61} showed that, up to a scaling constant, a symmetric Gaussian random matrix $M$ is orthogonally invariant if and only if it satisfies \eqref{eq:OI} for certain $c$. Notice that $\E[M_{ii} M_{jj}] = c$ for $i\ne j$. In other words, $c$ introduces a covariance between the diagonal entries of $M$ that is absent in the GOE. It is evident that $\GOI(c)$ becomes a GOE if $c=0$.

Throughout this paper, we denote by $\mathbf{1}_N$ and $I_N$ the $N\times 1$ column vector of ones and the $N\times N$ identity matrix respectively. The following result shows that the real covariance parameter $c$ in a GOI matrix cannot be too negative. For this, define a symmetric random matrix $M$ to be {\em nondegenerate} if the random vector of diagonal and upper (or lower) diagonal entries is nondegenerate.

\begin{lemma}\label{lemma:GOI-c}
Let $M$ be $\GOI(c)$ of size $N$. Then $c\ge -1/N$. In particular, $M$ is nondegenerate if and only if $c>-1/N$.
\end{lemma}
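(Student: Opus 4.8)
The plan is to reduce everything to the covariance structure of the vector of diagonal entries, since the off-diagonal entries will turn out to contribute a harmless nonsingular block. First I would read off from \eqref{eq:OI} the relevant second moments. Setting $i=j=k=l$ gives $\E[M_{ii}^2]=1+c$, while taking the first index pair to be $(i,i)$ and the second to be $(j,j)$ with $i\ne j$ gives $\E[M_{ii}M_{jj}]=c$. Hence the covariance matrix of the diagonal vector $d=(M_{11},\dots,M_{NN})^T$ is
\[
\Sigma_d = I_N + c\,\mathbf{1}_N\mathbf{1}_N^T .
\]

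Next I would invoke the fact that any genuine covariance matrix is positive semidefinite. Because $\mathbf{1}_N\mathbf{1}_N^T$ is rank one with nonzero eigenvalue $N$ (eigenvector $\mathbf{1}_N$) and eigenvalue $0$ on the orthogonal complement, the matrix $\Sigma_d$ has eigenvalue $1+cN$ on $\mathrm{span}(\mathbf{1}_N)$ and eigenvalue $1$ with multiplicity $N-1$. Positive semidefiniteness of $\Sigma_d$ therefore forces $1+cN\ge 0$, i.e. $c\ge -1/N$, which is the first assertion.

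For the nondegeneracy claim I would show that the remaining entries decouple. Taking $i\ne j$ and $k\ne l$ in \eqref{eq:OI}, the term $c\delta_{ij}\delta_{kl}$ vanishes, and for two distinct ordered pairs with $i<j$, $k<l$ one checks that $\delta_{ik}\delta_{jl}+\delta_{il}\delta_{jk}=0$; hence the strictly-upper-triangular entries $\{M_{ij}:i<j\}$ are mutually independent, each with variance $1/2$. A similar substitution with one diagonal and one off-diagonal index pair gives $\E[M_{ii}M_{kl}]=\delta_{ik}\delta_{il}=0$ for $k<l$, so these entries are also independent of the diagonal ones. Consequently the covariance matrix of the full vector $(M_{11},\dots,M_{NN},\{M_{ij}\}_{i<j})$ is block diagonal with blocks $\Sigma_d$ and $\tfrac12 I_{\binom{N}{2}}$. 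The second block is always nonsingular, so $M$ is nondegenerate precisely when $\Sigma_d$ is nonsingular, i.e. when $1+cN\ne 0$. Combined with $c\ge -1/N$, this is equivalent to $c>-1/N$.

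There is no serious obstacle here; the argument is pure linear algebra once the covariances are extracted. The only points requiring care are the correct bookkeeping of the Kronecker-delta contractions in \eqref{eq:OI} (distinguishing the diagonal--diagonal, diagonal--offdiagonal, and offdiagonal--offdiagonal cases) and confirming that the cross-covariances between the two blocks genuinely vanish, so that nondegeneracy of $M$ reduces exactly to nonsingularity of the rank-one perturbation $\Sigma_d$.
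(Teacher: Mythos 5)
Your proposal is correct and follows essentially the same route as the paper: both reduce the problem to the covariance matrix $I_N + c\,\mathbf{1}_N\mathbf{1}_N^T$ of the diagonal entries, using the block-diagonal structure of the full covariance, and then read off the constraint from its (non)singularity. The only cosmetic difference is that you diagonalize the rank-one perturbation to get the eigenvalue $1+Nc$, whereas the paper computes the determinant $1+Nc$ directly; these are interchangeable.
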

\begin{proof}\ Due to symmetry and the independence between diagonal and off-diagonal entries, we see that $M$ is nondegenerate if and only if the random vector of diagonal entries $(M_{11}, \ldots, M_{NN})$ is nondegenerate, which is equivalent to ${\rm det Cov}(M_{11}, \ldots, M_{NN}) >0$. It follows from \eqref{eq:OI} that
\begin{equation*}
{\rm Cov}(M_{11}, \ldots, M_{NN}) = I_N + c\mathbf{1}_N\mathbf{1}_N^T.
\end{equation*}
Therefore,
\begin{equation*}
{\rm det Cov}(M_{11}, \ldots, M_{NN}) = 1 + Nc,
\end{equation*}
yielding the desired result.
\end{proof}

As a characterization of GOI matrices, let $M$ be $\GOI(c)$ of size $N$. If $c\ge 0$, then $M$ can be represented as
\begin{equation}\label{Eq:c-positive}
M = H + \sqrt{c}\xi I_N,
\end{equation}
where $H$ is GOE of size $N$ and $\xi$ is a standard Gaussian variable independent of $H$ \cite{Mallows61,SMT08}. For $c\in [-1/N, 0)$, $M$ can be represented as
\[
M = H + \sqrt{-c}\xi' I_N,
\]
where $\xi'$ is a standard Gaussian variable such that $\E[H\xi']=-\sqrt{-c}I_N$.

As it will become clear later, Fyodorov's method \cite{Fyodorov04} is essentially based on the characterization \eqref{Eq:c-positive}, so its restriction on the covariance function translates to the constraint $c\ge 0$. Our characterization of the covariance function in Sections \ref{sec:isotropic-Euclidean} and \ref{sec:isotropic-sphere} below will include all valid values $c\in [-1/N, \infty)$.

\subsection{Density of the ordered eigenvalues of GOI matrices}
Recall \cite{Mehta:2004} that the density of the ordered eigenvalues $\la_1 \le \ldots \le \la_N$ of a GOE matrix $H$ is given by
\begin{equation}\label{Eq:GOE density}
f_0(\la_1, \ldots, \la_N)=\frac{1}{K_N} \exp\left\{ -\frac{1}{2}\sum_{i=1}^N \la_i^2 \right\} \prod_{1\leq i<j\leq N}|\la_i-\la_j|\mathbbm{1}_{\{\la_1\leq\ldots\leq\la_N\}},
\end{equation}
where the normalization constant $K_N$ can be computed from Selberg's integral
\begin{equation}\label{Eq:normalization constant}
K_N=2^{N/2}\prod_{i=1}^N\Gamma\left(\frac{i}{2}\right).
\end{equation}
We use the notation $\E_{\GOE}^N$ to represent the expectation under the GOE density \eqref{Eq:GOE density}, i.e., for a measurable function $g$,
\begin{equation}\label{Eq:E-GOE}
\E_{\GOE}^N [g(\la_1, \ldots, \la_N)] = \int_{\R^N} g(\la_1, \ldots, \la_N) f_0(\la_1, \ldots, \la_N) d\la_1\cdots d\la_N.
\end{equation}

Next, we shall derive the density of the ordered eigenvalues of a GOI matrix.
\begin{lemma}\label{lemma:GOI}
Let $M$ be an $N\times N$ nondegenerate $\GOI(c)$ matrix ($c>-1/N$). Then the density of the ordered eigenvalues $\la_1\le \ldots \le \la_N$ of $M$ is given by
\begin{equation}\label{Eq:GOI density}
\begin{split}
f_c(\la_1, \ldots, \la_N) &=\frac{1}{K_N \sqrt{1+Nc}} \exp\left\{ -\frac{1}{2}\sum_{i=1}^N \la_i^2 + \frac{c}{2(1+Nc)} \left(\sum_{i=1}^N \la_i\right)^2\right\}\\
&\quad \times  \prod_{1\le i<j \le N} |\la_i - \la_j| \mathbbm{1}_{\{\la_1\leq\ldots\leq\la_N\}},
\end{split}
\end{equation}
where $K_N$ is given in \eqref{Eq:normalization constant}.
\end{lemma}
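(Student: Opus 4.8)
The plan is to compute the joint density of the matrix entries of $M$, reduce it to an expression depending only on $\mathrm{tr}(M)$ and $\mathrm{tr}(M^2)$, and then pass to eigenvalue coordinates by the standard symmetric-matrix change of variables. First I would record the entrywise law of $M=\GOI(c)$: by \eqref{eq:OI} the off-diagonal entries $(M_{ij})_{i<j}$ are independent centered Gaussians with variance $1/2$, independent of the diagonal vector $d=(M_{11},\dots,M_{NN})^T$, which by (the proof of) Lemma \ref{lemma:GOI-c} is centered Gaussian with covariance $\Sigma = I_N + c\mathbf{1}_N\mathbf{1}_N^T$ and $\det\Sigma = 1+Nc>0$ (this is exactly where the nondegeneracy hypothesis $c>-1/N$ enters). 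Hence the joint density of $(M_{ij})_{i\le j}$ with respect to Lebesgue measure is proportional to $\exp\big(-\sum_{i<j}M_{ij}^2 - \tfrac12 d^T\Sigma^{-1}d\big)$.

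Next I would invert $\Sigma$ by the Sherman--Morrison formula, $\Sigma^{-1} = I_N - \frac{c}{1+Nc}\mathbf{1}_N\mathbf{1}_N^T$, so that $d^T\Sigma^{-1}d = \sum_i M_{ii}^2 - \frac{c}{1+Nc}\big(\sum_i M_{ii}\big)^2$. Combining this with the off-diagonal part and using $\mathrm{tr}(M^2) = \sum_i M_{ii}^2 + 2\sum_{i<j}M_{ij}^2$ together with $\mathrm{tr}(M) = \sum_i M_{ii}$, the joint density becomes proportional to
\[
\exp\Big(-\tfrac12\,\mathrm{tr}(M^2) + \frac{c}{2(1+Nc)}(\mathrm{tr}\, M)^2\Big).
\]
The crucial observation, and essentially the only real subtlety, is that both $\mathrm{tr}(M)$ and $\mathrm{tr}(M^2)$ are orthogonally invariant, so this density depends on $M$ only through its eigenvalues.

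Then I would apply the classical change of variables $M = Q\,\mathrm{diag}(\la_1,\dots,\la_N)\,Q^T$ (see \cite{Mehta:2004}), whose Jacobian contributes the Vandermonde factor $\prod_{i<j}|\la_i-\la_j|$; since the density above is a class function, integrating out the orthogonal (eigenvector) variable over the orthogonal group yields a constant $C_N$ independent of $c$. This produces the density \eqref{Eq:GOI density} up to normalization, with the indicator $\mathbbm{1}_{\{\la_1\le\dots\le\la_N\}}$ fixing the ordering.

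Finally, to pin down the normalization constant I would not recompute $C_N$ from scratch but instead compare with the GOE case $c=0$, whose ordered-eigenvalue density \eqref{Eq:GOE density} carries the known constant $K_N$. The eigenvector-integration constant $C_N$ is the same for every $c$, and the off-diagonal blocks are identically distributed in the two cases, so the only $c$-dependence in the normalization enters through the Gaussian normalizing factor of the diagonal block, namely $(\det\Sigma)^{-1/2} = (1+Nc)^{-1/2}$, relative to the GOE value $\Sigma=I_N$. Hence the constant for $\GOI(c)$ equals that for GOE divided by $\sqrt{1+Nc}$, giving $1/(K_N\sqrt{1+Nc})$ and completing the proof. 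I expect the main care to be needed in justifying that the orthogonal-group integral factors out as a $c$-independent constant, which is precisely where orthogonal invariance of the trace functionals is essential.
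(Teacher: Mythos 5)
Your proposal is correct and follows essentially the same route as the paper: both compute the joint entrywise density of $M$, rewrite the exponent as $-\tfrac12\mathrm{tr}(M^2)+\tfrac{c}{2(1+Nc)}(\mathrm{tr}\,M)^2$ via the Sherman--Morrison inverse of $I_N+c\mathbf{1}_N\mathbf{1}_N^T$, and then invoke Mehta's orthogonal change of variables to pick up the Vandermonde factor. The only cosmetic difference is that you fix the normalization by comparing with the GOE case $c=0$ rather than carrying the explicit constant through, which is a perfectly valid shortcut.
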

\begin{proof}\
Define the operator ``vec'' that takes the diagonal and above-diagonal entries of $M$ as a new vector, i.e.
$$
{\rm vec}(M) = (M_{11}, \cdots, M_{NN}, M_{ij}, 1\le i\le j\le N)^T.
$$
By \eqref{eq:OI}, we have
\begin{equation*}
\begin{split}
\Sigma:=\E\left[{\rm vec}(M){\rm vec}(M)^T\right] = \left(
     \begin{array}{cc}
       I_N + c\mathbf{1}_N\mathbf{1}_N^T & 0 \\
       0 & \frac{1}{2}I_{N(N-1)/2} \\
     \end{array}
   \right),
\end{split}
\end{equation*}
such that
\[
{\rm det}(\Sigma)=\frac{1+Nc}{2^{N(N-1)/2}} \quad {\rm and} \quad \Sigma^{-1}= \left(
\begin{array}{cc}
I_N -\frac{c}{1+Nc}\mathbf{1}_N\mathbf{1}_N^T & 0 \\
0 & 2I_{N(N-1)/2} \\
\end{array}
\right).
\]
Therefore
\begin{equation*}
\begin{split}
{\rm vec}(M)^T \Sigma^{-1} {\rm vec}(M) =\sum_{1\le i, j\le N} M_{ij}^2 -\frac{c}{1+Nc} \left(\sum_{i=1}^N M_{ii}\right)^2= {\rm tr}(M^2) -\frac{c}{1+Nc} ({\rm tr}(M))^2.
\end{split}
\end{equation*}
Now, we obtain the following joint density of ${\rm vec}(M)$:
\begin{equation*}
\begin{split}
f({\rm vec}(M)) = \frac{2^{N(N-1)/4}}{(2\pi)^{N(N+1)/4}\sqrt{1+Nc}}\exp\left\{ -\frac{1}{2}{\rm tr}(M^2) + \frac{c}{2(1+Nc)} ({\rm tr}(M))^2\right\}.
\end{split}
\end{equation*}
It then follows from similar arguments in Mehta \cite{Mehta:2004} that the joint density of the ordered eigenvalues of $M$ is given by
\begin{equation*}
\begin{split}
f_c(\la_1, \ldots, \la_N) &= \frac{1}{K_N\sqrt{1+Nc}} \exp\left\{ -\frac{1}{2}\sum_{i=1}^N \la_i^2 + \frac{c}{2(1+Nc)} \left(\sum_{i=1}^N \la_i\right)^2\right\} \\
&\quad \times \prod_{1\le i<j \le N} |\la_i - \la_j|\mathbbm{1}_{\{\la_1\leq\ldots\leq\la_N\}},
\end{split}
\end{equation*}
where $K_N$ is given in \eqref{Eq:normalization constant}, yielding the desired result.
\end{proof}

We use the notation $\E_{\GOI(c)}^N$ to represent the expectation under the GOI density \eqref{Eq:GOI density}, i.e., for a measurable function $g$,
\begin{equation}\label{Eq:E-GOI}
\E_{\GOI(c)}^N [g(\la_1, \ldots, \la_N)] = \int_{\R^N} g(\la_1, \ldots, \la_N) f_c(\la_1, \ldots, \la_N) d\la_1\cdots d\la_N.
\end{equation}
Notice that when $c=0$, $\GOI(c)$ becomes GOE and $f_c$ in \eqref{Eq:GOI density} becomes $f_0$ in \eqref{Eq:GOE density}, making the notations consistent.

\section{Isotropic Gaussian Random Fields on Euclidean Space}\label{sec:isotropic-Euclidean}
Let $X=\{X(t), t\in \R^N\}$ be a centered, unit-variance, smooth isotropic Gaussian random field on $\R^N$. Here and in the sequel, the smoothness assumption means that the field satisfies the condition (11.3.1) in \cite{AT07}, which is slightly stronger than $C^2$ but can be implied by $C^3$. Due to isotropy, we can write the covariance function of $X$ as $\E\{X(t)X(s)\}=\rho(\|t-s\|^2)$ for an appropriate function $\rho(\cdot): [0,\infty) \rightarrow \R$, and denote
\begin{equation}\label{Eq:kappa}
\rho'=\rho'(0), \quad \rho''=\rho''(0), \quad \eta=\sqrt{-\rho'}/\sqrt{\rho''}, \quad \kappa=-\rho'/\sqrt{\rho''}.
\end{equation}

\begin{remark}\label{remark:scaling}
The parameters in \eqref{Eq:kappa} have the following useful property with respect to scaling of the parameter space. Suppose we define a field $\tilde{X}(t) = X(a t)$ for $a > 0$ with covariance function $\E\{\tilde{X}(t)\tilde{X}(s)\} = \tilde{\rho}(\|t-s\|^2) = \rho(a^2 \|t-s\|^2)$. Then the corresponding parameters are $\tilde{\eta} = \eta/a$ and $\tilde{\kappa} = \kappa$. In other words, the parameter $\eta$ scales inversely proportionally to the scaling of the parameter space, while the parameter $\kappa$ is invariant to it.
\end{remark}
\vspace{0.3cm}

Throughout this paper, we always assume $\rho'\rho''\ne 0$, which is equivalent to the nondegeneracy of the first and second derivatives of the field (see Lemma \ref{Lem:cov of isotropic Euclidean} below). Let
\begin{equation*}
\begin{split}
X_i(t)&=\frac{\partial X(t)}{\partial t_i}, \quad \nabla X(t)= (X_1(t), \ldots, X_N(t))^T,\\
X_{ij}(t)&=\frac{\partial^2 X(t)}{\partial t_it_j}, \quad \nabla^2 X(t)= (X_{ij}(t))_{1\le i, j\le N}.
\end{split}
\end{equation*}
The following result, characterizing the covariance of $(X(t), \nabla X(t), \nabla^2 X(t))$, can be derived easily by elementary calculations; see also \cite{AzaisW08,CS15}.

\begin{lemma}\label{Lem:cov of isotropic Euclidean} Let $\{X(t), t\in \R^N\}$ be a centered, unit-variance, smooth isotropic Gaussian random field. Then for each $t\in \R^N$ and $i$, $j$, $k$, $l\in\{1,\ldots, N\}$,
\begin{equation*}
\begin{split}
&\E\{X_i(t)X(t)\}=\E\{X_i(t)X_{jk}(t)\}=0, \quad \E\{X_i(t)X_j(t)\}=-\E\{X_{ij}(t)X(t)\}=-2\rho'\delta_{ij},\\
&\E\{X_{ij}(t)X_{kl}(t)\}=4\rho''(\delta_{ij}\delta_{kl} + \delta_{ik}\delta_{jl} + \delta_{il}\delta_{jk}),
\end{split}
\end{equation*}
where $\rho'$ and $\rho''$ are defined in \eqref{Eq:kappa}.
\end{lemma}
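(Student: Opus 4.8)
The plan is to compute each of the stated covariances directly from the isotropic covariance function $\E\{X(t)X(s)\}=\rho(\|t-s\|^2)$ by differentiating under the expectation and evaluating at coincident points $s=t$. Since $X$ is smooth (satisfying (11.3.1) of \cite{AT07}), differentiation and expectation may be interchanged, so I would repeatedly use the identity $\E\{\partial^\alpha_t X(t)\,\partial^\beta_s X(s)\}=\partial^\alpha_t\partial^\beta_s\rho(\|t-s\|^2)$ and then set $s=t$. It is convenient to write $r=\|t-s\|^2=\sum_{m=1}^N(t_m-s_m)^2$, so that $\partial r/\partial t_i=2(t_i-s_i)$ and $\partial r/\partial s_i=-2(t_i-s_i)$; every derivative of $\rho(r)$ produces factors of these linear terms together with $\rho'(r)$, $\rho''(r)$, etc., and the key simplification is that any surviving factor of $(t_i-s_i)$ vanishes when $s=t$.

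First I would handle the low-order terms. For $\E\{X_i(t)X(t)\}$ I differentiate $\rho(r)$ once in $t_i$, giving $2(t_i-s_i)\rho'(r)$, which is zero at $s=t$; this is also immediate from stationarity (the field and its first derivative at the same point are independent). For $\E\{X_i(t)X_j(t)\}$ I differentiate once in $t_i$ and once in $s_j$, obtaining a term $-2\delta_{ij}\rho'(r)$ plus a term carrying a factor $(t_i-s_i)(t_j-s_j)$ that dies at $s=t$; evaluating gives $-2\rho'\delta_{ij}$, consistent with the definition $\rho'=\rho'(0)$ in \eqref{Eq:kappa}. The mixed term $\E\{X_{ij}(t)X(t)\}$ comes from differentiating twice in $t$ (namely $\partial^2/\partial t_i\partial t_j$) and setting $s=t$; here $\partial/\partial t_j$ of $2(t_i-s_i)\rho'(r)$ yields $2\delta_{ij}\rho'(r)+4(t_i-s_i)(t_j-s_j)\rho''(r)$, so at $s=t$ one gets $2\rho'\delta_{ij}$, i.e.\ $\E\{X_{ij}(t)X(t)\}=2\rho'\delta_{ij}=-(-2\rho'\delta_{ij})$, matching the claimed sign relation with $\E\{X_i(t)X_j(t)\}$.

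The remaining two identities are $\E\{X_i(t)X_{jk}(t)\}=0$ and the fourth-order formula $\E\{X_{ij}(t)X_{kl}(t)\}=4\rho''(\delta_{ij}\delta_{kl}+\delta_{ik}\delta_{jl}+\delta_{il}\delta_{jk})$. For the former I differentiate $\rho(r)$ once in $t_i$ and twice in $s_j,s_k$; by the parity argument every term in the result is odd in the components $(t_m-s_m)$ and so vanishes at $s=t$ (equivalently, $X_i$ and $X_{jk}$ at a common point have opposite parities under $X\mapsto X(2t-\cdot)$ and hence are uncorrelated). For the fourth-order term I apply $\partial^2/\partial t_i\partial t_j$ and $\partial^2/\partial s_k\partial s_l$ to $\rho(r)$. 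This is the most delicate step because the four differentiations generate many terms; the ones that survive at $s=t$ are precisely those in which all the linear factors $(t_m-s_m)$ have been differentiated away, leaving pure products of Kronecker deltas times $\rho''(0)$. Bookkeeping the three ways to pair the four indices into deltas is what produces the symmetric combination $\delta_{ij}\delta_{kl}+\delta_{ik}\delta_{jl}+\delta_{il}\delta_{jk}$, and tracking the numerical coefficient through the chain rule (each pair of matched indices contributes a factor absorbing into the overall $4$) yields the constant $4\rho''$.

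The main obstacle is the last computation: organizing the fourth mixed partial derivative of $\rho(\|t-s\|^2)$ so that one cleanly discards every term still carrying an odd factor of $(t_m-s_m)$ at $s=t$ and correctly tallies both the delta-pairing pattern and its coefficient. I would manage this by consistently using the substitution $r=\|t-s\|^2$ and the elementary derivatives of $r$ given above, noting that a nonvanishing contribution requires each factor $(t_m-s_m)$ to be hit by a derivative in the same coordinate (one from a $t$-derivative, one from an $s$-derivative), which is exactly the mechanism generating the three Kronecker-delta pairings with the stated coefficient.
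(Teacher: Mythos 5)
Your proposal is correct and is exactly the "elementary calculation" the paper has in mind: the paper gives no written proof of this lemma, simply asserting it follows by differentiating the isotropic covariance $\rho(\|t-s\|^2)$ and citing \cite{AzaisW08,CS15}, and your computation of the mixed partials at $s=t$ (with the parity argument killing odd terms and the delta-pairing bookkeeping for the fourth-order term) fills in precisely those details with the correct signs and coefficients.
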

In particular, by Lemma \ref{Lem:cov of isotropic Euclidean}, we see that ${\rm Var}(X_i(t))=-2\rho'$ and ${\rm Var}(X_{ii}(t))=12\rho''$ for any $i\in\{1,\ldots, N\}$, which implies $\rho'<0$ and $\rho''>0$ and hence $\eta>0$ and $\kappa>0$.

\subsection{The Non-Boundary Case: $0 < \kappa^2 < (N+2)/N$}
We have the following results on the constrain of $\kappa^2$ and the nondegeneracy of joint distribution of the field and its first and second derivatives.
\begin{proposition}\label{Prop:nondegenerate-field}
\ Let the assumptions in Lemma \ref{Lem:cov of isotropic Euclidean} hold. Then $\kappa^2\le (N+2)/N$. In particular, the Gaussian vector $(X(t), \nabla X(t), X_{ij}(t), 1\leq i\leq j\leq N)$ is nondegenerate if and only if $\kappa^2<(N+2)/N$.
\end{proposition}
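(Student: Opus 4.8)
The plan is to reduce the nondegeneracy question to a single $(N+1)\times(N+1)$ covariance matrix and then compute its determinant explicitly. First I would exploit the block structure coming from Lemma \ref{Lem:cov of isotropic Euclidean}. The relations $\E\{X_i(t)X(t)\} = \E\{X_i(t)X_{jk}(t)\} = 0$ show that $\nabla X(t)$ is uncorrelated with, hence (being jointly Gaussian) independent of, the pair $(X(t), \nabla^2 X(t))$; and since $\E\{X_i(t)X_j(t)\} = -2\rho'\delta_{ij}$ with $\rho' < 0$, the gradient block is always nondegenerate and factors out. Next, for the off-diagonal Hessian entries $X_{ij}(t)$ with $i<j$, the formula $\E\{X_{ij}(t)X_{kl}(t)\} = 4\rho''(\delta_{ij}\delta_{kl}+\delta_{ik}\delta_{jl}+\delta_{il}\delta_{jk})$ reduces (for $i<j$, $k<l$) to $\E\{X_{ij}(t)X_{kl}(t)\} = 4\rho''\delta_{ik}\delta_{jl}$, while $\E\{X_{ij}(t)X_{kk}(t)\} = 0$ and $\E\{X_{ij}(t)X(t)\} = 2\rho'\delta_{ij} = 0$. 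Thus the off-diagonal entries are mutually independent with variance $4\rho'' > 0$ and independent of everything else, so they also factor out. The nondegeneracy of the full vector is therefore equivalent to that of $(X(t), X_{11}(t), \ldots, X_{NN}(t))$.

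The core computation is the determinant of the covariance matrix $\Sigma$ of $(X(t), X_{11}(t), \ldots, X_{NN}(t))$. Using $\E\{X(t)^2\}=1$, $\E\{X(t)X_{ii}(t)\}=2\rho'$, and $\E\{X_{ii}(t)X_{jj}(t)\} = 4\rho''(1 + 2\delta_{ij})$, I would write
\begin{equation*}
\Sigma = \begin{pmatrix} 1 & 2\rho'\mathbf{1}_N^T \\ 2\rho'\mathbf{1}_N & 8\rho'' I_N + 4\rho''\mathbf{1}_N\mathbf{1}_N^T \end{pmatrix},
\end{equation*}
and evaluate $\det\Sigma$ via the Schur complement of the lower-right block $C = 8\rho''(I_N + \tfrac{1}{2}\mathbf{1}_N\mathbf{1}_N^T)$. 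The matrix determinant lemma gives $\det C = (8\rho'')^N(N+2)/2$, and Sherman--Morrison gives $C^{-1} = (8\rho'')^{-1}\left(I_N - \mathbf{1}_N\mathbf{1}_N^T/(N+2)\right)$, whence $\mathbf{1}_N^T C^{-1}\mathbf{1}_N = 2N/[8\rho''(N+2)]$. Recalling $\kappa^2 = (\rho')^2/\rho''$, the Schur complement is $1 - (2\rho')^2 \mathbf{1}_N^T C^{-1}\mathbf{1}_N = 1 - N\kappa^2/(N+2)$, so
\begin{equation*}
\det\Sigma = (8\rho'')^N\frac{N+2}{2}\left(1 - \frac{N\kappa^2}{N+2}\right).
\end{equation*}

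To conclude, since $\Sigma$ is a genuine covariance matrix it is positive semidefinite, so $\det\Sigma \ge 0$; as $\rho'' > 0$ forces the prefactor to be strictly positive, this yields $1 - N\kappa^2/(N+2) \ge 0$, i.e. $\kappa^2 \le (N+2)/N$. For the nondegeneracy statement, the field is nondegenerate exactly when $\det\Sigma > 0$, which by the factorization above happens if and only if $1 - N\kappa^2/(N+2) > 0$, i.e. $\kappa^2 < (N+2)/N$. The only step demanding genuine care — and hence the main (mild) obstacle — is the determinant evaluation of the structured block $\Sigma$; but the rank-one-plus-scalar form of $C$ makes both the matrix determinant lemma and Sherman--Morrison apply cleanly, so no heavy computation is actually required.
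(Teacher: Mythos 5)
Your proposal is correct and follows essentially the same route as the paper: reduce to the joint vector $(X(t), X_{11}(t), \ldots, X_{NN}(t))$ by independence of the gradient and off-diagonal Hessian entries, then evaluate the determinant of its covariance via the Schur complement of the Hessian-diagonal block, obtaining the factor $1 - N\kappa^2/(N+2)$. The only cosmetic difference is that you compute the inverse of the block $8\rho'' I_N + 4\rho''\mathbf{1}_N\mathbf{1}_N^T$ by Sherman--Morrison, whereas the paper simply writes it down.
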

\begin{proof}\
By Lemma \ref{Lem:cov of isotropic Euclidean}, for each $t$, $\nabla X(t)$ is independent of $(X_{ij}(t), 1\leq i< j\leq N)$, and moreover, all of them are independent of $(X(t), X_{11}(t), \ldots,  X_{NN}(t))$. Therefore, the degeneracy of $(X(t), \nabla X(t), X_{ij}(t), 1\leq i\leq j\leq N)$ is equivalent to that of $(X(t), X_{11}(t), \ldots,  X_{NN}(t))$.

Let
\begin{equation*}
\begin{split}
\mathbf{D}={\rm Cov}(X_{11}(t), \ldots, X_{NN}(t)) = \left(
               \begin{array}{cccc}
                 12\rho'' & 4\rho'' & \cdots & 4\rho''  \\
                 4\rho'' & 12\rho'' & \cdots & 4\rho''\\
                 \vdots & \vdots & \vdots & \vdots  \\
                 4\rho'' &4\rho'' & \cdots & 12\rho'' \\
               \end{array}
             \right)
\end{split}
\end{equation*}
and
\begin{equation*}
\begin{split}
\mathbf{B} &= (\E[X(t)X_{11}(t)], \E[X(t)X_{22}(t)], \cdots, \E[X(t)X_{NN}(t)]  )= (2\rho', \cdots, 2\rho' ).
\end{split}
\end{equation*}
Notice that
\begin{equation*}
\begin{split}
{\rm det Cov}(X(t), X_{11}(t), \ldots, X_{NN}(t)) &= {\rm det }\left(
                                                               \begin{array}{cc}
                                                                 1 & \mathbf{B} \\
                                                                 \mathbf{B}^T & \mathbf{D} \\
                                                               \end{array}
                                                             \right) = (1-\mathbf{B}\mathbf{D}^{-1}\mathbf{B}^T){\rm det }(\mathbf{D})
\end{split}
\end{equation*}
and
\begin{equation*}
\begin{split}
\mathbf{D}^{-1}= \frac{1}{8(N+2)\rho''}\left(
                                         \begin{array}{cccc}
                                           N+1 & -1 & \cdots & -1 \\
                                           -1 & N+1 & \cdots & -1 \\
                                           \vdots & \vdots & \vdots & \vdots \\
                                           -1 & -1 & \cdots & N+1 \\
                                         \end{array}
                                       \right).
\end{split}
\end{equation*}
Therefore,
\begin{equation*}
\begin{split}
{\rm det Cov}(X(t), X_{11}(t), \ldots, X_{NN}(t))>0 \Leftrightarrow 1-\mathbf{B}\mathbf{D}^{-1}\mathbf{B}^T=1-\frac{N}{N+2}\kappa^2>0,
\end{split}
\end{equation*}
yielding the desired result.
\end{proof}

Due to Proposition \ref{Prop:nondegenerate-field}, we will make use of the following non-boundary condition, which characterizes the nondegeneracy of the joint distribution of the field and its Hessian.

\begin{itemize}
\item[({\bf A})] $\kappa^2 < (N+2)/N$.
\end{itemize}

\begin{lemma}\label{Lem:GOE for det Hessian} Let the assumptions in Lemma \ref{Lem:cov of isotropic Euclidean} hold. Let  $\widetilde{M}$ and $M$ be $\GOI(1/2)$ and $\GOI((1-\kappa^2)/2)$ matrices respectively.

(i) The distribution of $\nabla^2X(t)$ is the same as that of $\sqrt{8\rho''}\widetilde{M}$.

(ii) The distribution of $(\nabla^2X(t)|X(t)=x)$ is the same as that of $\sqrt{8\rho''}\big[M - \big(\kappa x/\sqrt{2}\big)I_N\big]$.
\end{lemma}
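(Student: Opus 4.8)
The plan is to use that, on each side of the two claimed identities, one has a symmetric Gaussian random matrix, whose law is therefore completely determined by its mean together with the entrywise covariances $\E[(\,\cdot\,)_{ij}(\,\cdot\,)_{kl}]$. Each part thus reduces to a moment-matching computation, combining the covariances of $(X(t),\nabla X(t),\nabla^2X(t))$ from Lemma \ref{Lem:cov of isotropic Euclidean} with the defining covariance \eqref{eq:OI} of a $\GOI(c)$ matrix. Since $X$ is centered, $\nabla^2 X(t)$ is a centered symmetric Gaussian matrix, and so is $\sqrt{8\rho''}\,\widetilde M$.

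For part (i) it then suffices to match covariances. Setting $c=1/2$ in \eqref{eq:OI} and scaling by $8\rho''$ gives $8\rho''\,\E[\widetilde M_{ij}\widetilde M_{kl}] = 4\rho''(\delta_{ik}\delta_{jl}+\delta_{il}\delta_{jk}+\delta_{ij}\delta_{kl})$, which is exactly $\E[X_{ij}(t)X_{kl}(t)]$ by Lemma \ref{Lem:cov of isotropic Euclidean}. As both matrices are centered, this identity of covariances gives the equality in distribution.

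For part (ii) I would apply the Gaussian conditioning formula. The gradient is independent of $(X(t),\nabla^2 X(t))$ and so is irrelevant; moreover $(X(t),\nabla^2 X(t))$ is jointly Gaussian with ${\rm Var}(X(t))=1$ and $\E[X_{ij}(t)X(t)]=2\rho'\delta_{ij}$. Hence the conditional law of $\nabla^2 X(t)$ given $X(t)=x$ is Gaussian with mean $2\rho' x\,\delta_{ij}$ and covariance equal to the Schur complement $\E[X_{ij}(t)X_{kl}(t)] - \E[X_{ij}(t)X(t)]\,\E[X_{kl}(t)X(t)]$, which by Lemma \ref{Lem:cov of isotropic Euclidean} is $4\rho''(\delta_{ik}\delta_{jl}+\delta_{il}\delta_{jk}+\delta_{ij}\delta_{kl}) - 4(\rho')^2\delta_{ij}\delta_{kl}$. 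I would match this against $\sqrt{8\rho''}\big[M-(\kappa x/\sqrt 2)I_N\big]$ with $M$ being $\GOI((1-\kappa^2)/2)$: the deterministic shift has diagonal entries $-\sqrt{8\rho''}\,\kappa x/\sqrt 2 = -2\sqrt{\rho''}\,\kappa x = 2\rho' x$, using $\kappa=-\rho'/\sqrt{\rho''}$ and $\rho'<0$, which matches the conditional mean; and $8\rho''\,\E[M_{ij}M_{kl}] = 4\rho''(\delta_{ik}\delta_{jl}+\delta_{il}\delta_{jk}) + 4\rho''(1-\kappa^2)\delta_{ij}\delta_{kl}$, which equals the conditional covariance after substituting $\rho''\kappa^2=(\rho')^2$.

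The computations are routine; the step needing the most care is the bookkeeping of the three-index Kronecker structure, namely keeping the ``trace'' term $\delta_{ij}\delta_{kl}$ separate from the symmetric term $\delta_{ik}\delta_{jl}+\delta_{il}\delta_{jk}$, together with the two reductions $2\sqrt{\rho''}\,\kappa=-2\rho'$ and $\rho''\kappa^2=(\rho')^2$ that translate the field parameters into the GOI parameter. I would also remark that the admissibility of $c=(1-\kappa^2)/2$ as a covariance parameter, i.e.\ $c\ge -1/N$, is exactly the bound $\kappa^2\le (N+2)/N$ of Proposition \ref{Prop:nondegenerate-field}, with the nondegenerate regime of condition ({\bf A}) corresponding to the strict inequality $c>-1/N$.
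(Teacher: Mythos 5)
Your proposal is correct and follows essentially the same route as the paper: part (i) by matching the centered Gaussian covariance structure of $\nabla^2X(t)$ against that of $\sqrt{8\rho''}\,\widetilde M$ with $c=1/2$, and part (ii) by the Gaussian conditioning formula, writing the conditional Hessian as a centered symmetric Gaussian matrix plus the shift $2\rho'xI_N$ and identifying its covariance $4\rho''(\delta_{ik}\delta_{jl}+\delta_{il}\delta_{jk})+4(\rho''-\rho'^2)\delta_{ij}\delta_{kl}$ with that of $\sqrt{8\rho''}M$ for $c=(1-\kappa^2)/2$. The parameter reductions $2\sqrt{\rho''}\kappa=-2\rho'$ and $\rho''\kappa^2=\rho'^2$ are exactly the ones the paper relies on, so there is nothing to add.
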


\begin{proof} \ Part (i) is a direct consequence of Lemma \ref{Lem:cov of isotropic Euclidean}. For part (ii), applying Lemma \ref{Lem:cov of isotropic Euclidean} and the well-known conditional formula for Gaussian variables, we see that $(\nabla^2f(t)|f(t)=x)$ can be written as $\Delta + 2\rho'xI_N$, where $\Delta=(\Delta_{ij})_{1\leq i,j\leq N}$ is a symmetric $N\times N$ matrix with centered Gaussian entries such that
\begin{equation*}
\E\{\Delta_{ij}\Delta_{kl}\}=4\rho''(\delta_{ik}\delta_{jl} + \delta_{il}\delta_{jk}) + 4(\rho''-\rho'^2)\delta_{ij}\delta_{kl}.
\end{equation*}
Therefore, $\Delta$ has the same distribution as the random matrix $\sqrt{8\rho''}M$, completing the proof.
\end{proof}

Notice that condition $({\bf A})$ implies $(1-\kappa^2)/2>-1/N$, making the Gaussian random matrix $\GOI((1-\kappa^2)/2)$ in Lemma \ref{Lem:GOE for det Hessian} nondegenerate. Since $c = (1-\kappa^2)/2$ here, the condition for nondegeneracy in Proposition \ref{Prop:nondegenerate-field} corresponds to the GOI condition $c > -1/N$ of Lemma \ref{lemma:GOI-c}. Therefore, the parameter $0 < \kappa^2 < (N+2)/N$ covers all smooth isotropic Gaussian random fields in $\R^N$ with nondegenerate Hessian conditional on the field.

\subsection{Expected Number and Height Distribution of Critical Points}
Recall $\mu_i(X, u)$, the number of critical points of index $i$ of $X$ exceeding $u$, defined in \eqref{Eq:mu-i} and that $\mu_i(X)=\mu_i(X, -\infty)$. We have the following result for computing $\E[\mu_i(X)]$ and $\E[\mu_i(X, u)]$. 
\begin{theorem}\label{Thm:critical-points-nondegenerate-euclidean} Let $\{X(t), t\in \R^N\}$ be a centered, unit-variance, smooth isotropic Gaussian random field satisfying condition $({\bf A})$. Then for $i=0, \ldots, N$,
\begin{equation}\label{Eq:critial-Euclidean}
\begin{split}
\E[\mu_i(X)] &=  \frac{2^{N/2}}{\pi^{N/2}\eta^N} \E_{\GOI(1/2)}^N \left[\prod_{j=1}^N|\la_j|\mathbbm{1}_{\{\la_i<0<\la_{i+1}\}}\right]
\end{split}
\end{equation}
and
\begin{equation*}
\begin{split}
\E[\mu_i(X, u)]  &=\frac{2^{N/2}}{\pi^{N/2}\eta^N} \int_u^\infty \phi(x) \E_{\GOI((1-\kappa^2)/2)}^N \left[\prod_{j=1}^N\big|\la_j-\kappa x/\sqrt{2}\big|\mathbbm{1}_{\{\la_i<\kappa x/\sqrt{2}<\la_{i+1}\}}\right] dx,
\end{split}
\end{equation*}
where $\E_{\GOI(c)}^N$ is defined in \eqref{Eq:E-GOI} and $\la_0$ and $\la_{N+1}$ are regarded respectively as $-\infty$ and $\infty$ for consistency.
\end{theorem}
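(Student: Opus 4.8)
The plan is to apply the Kac-Rice metatheorem (the smoothness assumption, condition (11.3.1) of \cite{AT07}, guarantees its validity here) to the gradient field $\nabla X$, treating $|\det \nabla^2 X(t)|\,\mathbbm{1}_{\{\cdots\}}$ as the weight. This yields
\begin{equation*}
\E[\mu_i(X,u)] = \int_D \E\big[|\det \nabla^2 X(t)|\,\mathbbm{1}_{\{X(t)\ge u,\,{\rm index}(\nabla^2 X(t))=i\}} \,\big|\, \nabla X(t)=0\big]\, p_{\nabla X(t)}(0)\, dt,
\end{equation*}
where $p_{\nabla X(t)}$ denotes the density of $\nabla X(t)$. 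The first key observation, from Lemma \ref{Lem:cov of isotropic Euclidean}, is that $\nabla X(t)$ is independent of the pair $(X(t),\nabla^2 X(t))$, since $\E\{X_i X\}=\E\{X_i X_{jk}\}=0$. Hence the conditioning on $\nabla X(t)=0$ can simply be dropped from the expectation.

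Next I would evaluate the two scalar factors. By Lemma \ref{Lem:cov of isotropic Euclidean}, $\nabla X(t)\sim N(0,-2\rho' I_N)$, so $p_{\nabla X(t)}(0) = (-4\pi\rho')^{-N/2}$; and by isotropy the integrand is independent of $t$, so integrating over the unit-area disc $D$ merely removes the integral. For the unconstrained count $\E[\mu_i(X)]$ (the case $u=-\infty$), I would invoke Lemma \ref{Lem:GOE for det Hessian}(i) to replace $\nabla^2 X(t)$ by $\sqrt{8\rho''}\,\widetilde M$ with $\widetilde M\sim\GOI(1/2)$. Writing $\la_1\le\cdots\le\la_N$ for the ordered eigenvalues of $\widetilde M$, the determinant becomes $(8\rho'')^{N/2}\prod_j|\la_j|$, while the event ${\rm index}=i$ translates into $\la_i<0<\la_{i+1}$. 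Collecting the constants and using $\eta^2=-\rho'/\rho''$ gives $(8\rho'')^{N/2}/(-4\pi\rho')^{N/2}=2^{N/2}/(\pi^{N/2}\eta^N)$, which is exactly \eqref{Eq:critial-Euclidean}.

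For the level-constrained count I would condition further on $X(t)=x$, writing $\mathbbm{1}_{\{X(t)\ge u\}}$ as $\int_u^\infty \phi(x)\,dx$ against the standard Gaussian density of $X(t)$. Lemma \ref{Lem:GOE for det Hessian}(ii) then represents $(\nabla^2 X(t)\,|\,X(t)=x)$ as $\sqrt{8\rho''}\,[M-(\kappa x/\sqrt2)I_N]$ with $M\sim\GOI((1-\kappa^2)/2)$; condition (\textbf{A}) ensures $(1-\kappa^2)/2>-1/N$, so this GOI matrix is nondegenerate and Lemma \ref{lemma:GOI} applies. The eigenvalues of the shifted matrix are $\sqrt{8\rho''}(\la_j-\kappa x/\sqrt2)$, so the determinant contributes $(8\rho'')^{N/2}\prod_j|\la_j-\kappa x/\sqrt2|$ and the event ${\rm index}=i$ becomes the shifted interlacing condition $\la_i<\kappa x/\sqrt2<\la_{i+1}$. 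Reassembling, with the same constant as before, yields the stated formula for $\E[\mu_i(X,u)]$.

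Most of the genuine difficulty has already been absorbed into the preceding lemmas (the GOI eigenvalue density of Lemma \ref{lemma:GOI} and the Hessian representation of Lemma \ref{Lem:GOE for det Hessian}), so the remaining work is largely bookkeeping. The two points requiring care are: first, checking that the regularity hypotheses of the Kac-Rice metatheorem hold, namely the smoothness assumption together with nondegeneracy of $(X(t),\nabla X(t),\nabla^2 X(t))$, which is exactly guaranteed by condition (\textbf{A}) via Proposition \ref{Prop:nondegenerate-field}; and second, the correct translation of the geometric event ${\rm index}(\nabla^2 X(t))=i$ into the ordered-eigenvalue interlacing condition, using that the deterministic rescaling by $\sqrt{8\rho''}>0$ and the deterministic shift by $-\kappa x/\sqrt2$ both preserve the ordering and hence the index count. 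I expect the conditioning step for the level-constrained case to be the part most prone to error, since one must simultaneously track how the scalar $x$ enters both the shift in the eigenvalues and the limits of the outer integral.
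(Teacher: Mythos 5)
Your proposal is correct and follows essentially the same route as the paper's proof: the Kac--Rice metatheorem combined with the independence of $\nabla X(t)$ from $(X(t),\nabla^2 X(t))$ given in Lemma \ref{Lem:cov of isotropic Euclidean}, the GOI representations of the Hessian from Lemma \ref{Lem:GOE for det Hessian}, and the constant bookkeeping via $\eta^2=-\rho'/\rho''$. The only difference is that you spell out the conditioning, the density factor $p_{\nabla X(t)}(0)$, and the translation of the index event in more detail than the paper's terse two-line computation, and all of those details check out.
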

\begin{proof} \ By the Kac-Rice metatheorem (see Theorem 11.2.1 in \cite{AT07}) and Lemmas \ref{Lem:cov of isotropic Euclidean} and \ref{Lem:GOE for det Hessian},
\begin{equation*}
\begin{split}
\E[\mu_i(X)] &= \frac{1}{(2\pi)^{N/2} (-2\rho')^{N/2}} \E[ |\text{det} (\nabla^2 X(t))|\mathbbm{1}_{\{\text{index} (\nabla^2 X(t)) = i\}}]\\
&= \left(\frac{2\rho''}{-\pi \rho'}\right)^{N/2} \E_{\GOI(1/2)}^N \left[\prod_{j=1}^N|\la_j|\mathbbm{1}_{\{\la_i<0<\la_{i+1}\}}\right]
\end{split}
\end{equation*}
and
\begin{equation*}
\begin{split}
\E[\mu_i(X, u)]  &=\frac{1}{(2\pi)^{N/2} (-2\rho')^{N/2}} \int_u^\infty \phi(x)\E[ |\text{det} (\nabla^2 X(t))|\mathbbm{1}_{\{\text{index} (\nabla^2 X(t)) = i\}} | X(t)=x]dx\\
&=\left(\frac{2\rho''}{-\pi \rho'}\right)^{N/2} \int_u^\infty \phi(x) \E_{\GOI((1-\kappa^2)/2)}^N \left[\prod_{j=1}^N\big|\la_j-\kappa x/\sqrt{2}\big|\mathbbm{1}_{\{\la_i<\kappa x/\sqrt{2}<\la_{i+1}\}}\right] dx.
\end{split}
\end{equation*}
Then the desired results follow from the definition of $\eta$.
\end{proof}

Recall $F_i(u)$, the height distribution of a critical point of index $i$ of $X$, defined in \eqref{Eq:F} or \eqref{Eq:F-ratio}.
Denote by $h$ the corresponding density, that is, $h_i(u)=-F_i'(u)$. The following result is an immediate consequence of \eqref{Eq:F-ratio} and Theorem \ref{Thm:critical-points-nondegenerate-euclidean}. 
\begin{corollary} \label{Cor:HeightDist-Euclidean-Nondegenerate}
Let the assumptions in Theorem \ref{Thm:critical-points-nondegenerate-euclidean} hold. Then for $i=0, \ldots, N$,
\begin{equation*}
\begin{split}
F_i(u)  &=\frac{\int_u^\infty \phi(x) \E_{\GOI((1-\kappa^2)/2)}^N \left[\prod_{j=1}^N\big|\la_j-\kappa x/\sqrt{2}\big|\mathbbm{1}_{\{\la_i<\kappa x/\sqrt{2}<\la_{i+1}\}}\right] dx}{\E_{\GOI(1/2)}^N \left[\prod_{j=1}^N|\la_j|\mathbbm{1}_{\{\la_i<0<\la_{i+1}\}}\right] }.
\end{split}
\end{equation*}
\end{corollary}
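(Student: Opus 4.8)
The plan is to obtain the formula by direct substitution into the ratio identity \eqref{Eq:F-ratio}, since both its numerator and denominator have already been evaluated in Theorem \ref{Thm:critical-points-nondegenerate-euclidean}. First I would recall that $F_i(u) = \E[\mu_i(X,u)]/\E[\mu_i(X)]$, which reduces the whole task to forming the quotient of the two formulae that the theorem supplies.

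Next I would write out the two expressions and note that each carries the same multiplicative constant $2^{N/2}/(\pi^{N/2}\eta^N)$, which is independent of $u$ and of the eigenvalue-level GOI expectations. The one essential step is then to observe that this common prefactor cancels in the ratio, leaving precisely the claimed quotient of $\int_u^\infty \phi(x)\,\E_{\GOI((1-\kappa^2)/2)}^N[\prod_{j=1}^N|\la_j-\kappa x/\sqrt{2}|\,\mathbbm{1}_{\{\la_i<\kappa x/\sqrt{2}<\la_{i+1}\}}]\,dx$ over $\E_{\GOI(1/2)}^N[\prod_{j=1}^N|\la_j|\,\mathbbm{1}_{\{\la_i<0<\la_{i+1}\}}]$, with the boundary convention $\la_0=-\infty$, $\la_{N+1}=+\infty$ carried over unchanged so that the statement holds for every $i=0,\ldots,N$.

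Since there is no analytic content beyond this cancellation, I do not anticipate a genuine obstacle. The only point worth checking for well-posedness is that the denominator is strictly positive, so that the quotient makes sense. This follows because condition $({\bf A})$ guarantees, via the discussion after Lemma \ref{Lem:GOE for det Hessian}, that the relevant $\GOI$ matrices are nondegenerate, whence $\E[\mu_i(X)]>0$ whenever the index-$i$ event occurs with positive probability; this is exactly the regime in which $F_i(u)$ is defined through \eqref{Eq:F}.
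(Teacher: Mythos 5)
Your proposal is correct and matches the paper's own derivation exactly: the paper states the corollary is an immediate consequence of \eqref{Eq:F-ratio} and Theorem \ref{Thm:critical-points-nondegenerate-euclidean}, i.e., one forms the quotient and cancels the common prefactor $2^{N/2}/(\pi^{N/2}\eta^N)$. Your additional check that the denominator is strictly positive under condition $({\bf A})$ is a reasonable remark the paper leaves implicit.
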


\begin{remark}\label{remark:scaling-critical-points}
	Note that $\E[\mu_i(X)]$ in \eqref{Eq:critial-Euclidean} depends only on $\eta$, while $F_i$ and hence $h_i$ depend only on $\kappa$. By the scaling properties of $\eta$ and $\kappa$ in Remark \ref{remark:scaling}, if we transform $t$ to $a t$ for some positive constant $a$, then the number of critical points increases proportionally to $a^N$, while the height distribution does not change.
\end{remark}
\vspace{0.3cm}

All the expectations in Theorem \ref{Thm:critical-points-nondegenerate-euclidean} and Corollary \ref{Cor:HeightDist-Euclidean-Nondegenerate} can be now solved in explicit form plugging in the GOI density \eqref{Eq:GOI density} directly. Note that it is sufficient to evaluate the expectations for $i=\lfloor N/2 \rfloor,\ldots,N$ since the rest of the indices can be obtained by symmetry:
\[
\E[\mu_{N-i}(X, u)] = \E[\mu_i(X, -u)], \qquad i=0,\ldots,N.
\]
Note that if $N$ is even, then $\E[\mu_{N/2}(X, u)] = \E[\mu_{N/2}(X, -u)]$, and thus the density $h_{N/2}(x)$ of the height of a critical point of index $N/2$ is symmetric around zero.


As an example, we show the explicit formulae for the expected number and height distribution of critical points for isotropic Gaussian fields on $\R^2$ satisfying condition $({\bf A})$.
\begin{example}\label{Example:critical-points-nondegenerate-euclidean} Let the assumptions in Theorem \ref{Thm:critical-points-nondegenerate-euclidean} hold and let $N=2$. Applying Theorem \ref{Thm:critical-points-nondegenerate-euclidean} and Corollary \ref{Cor:HeightDist-Euclidean-Nondegenerate}, together with Lemma \ref{lemma:GOI}, we obtain
\begin{equation*}
\E[\mu_2(X)] = \E[\mu_0(X)] = \frac{\E[\mu_1(X)]}{2} = \frac{1}{\sqrt{3}\pi\eta^2}
\end{equation*}
and
\begin{equation*}
\begin{split}
h_2(x) &=h_0(-x)=\sqrt{3}\kappa^2(x^2-1)\phi(x)\Phi\left(\frac{\kappa x}{\sqrt{2-\kappa^2}} \right) + \frac{\kappa x\sqrt{3(2-\kappa^2)}}{2\pi}e^{-\frac{x^2}{2-\kappa^2}} \\
&\qquad \qquad \qquad +\frac{\sqrt{6}}{\sqrt{\pi(3-\kappa^2)}}e^{-\frac{3x^2}{2(3-\kappa^2)}}\Phi\left(\frac{\kappa x}{\sqrt{(3-\kappa^2)(2-\kappa^2)}} \right),\\
h_1(x) &=\frac{\sqrt{3}}{\sqrt{2\pi(3-\kappa^2)}}e^{-\frac{3x^2}{2(3-\kappa^2)}},
\end{split}
\end{equation*}
where $h_1(x)$ is noticeably a normal distribution (see Figure \ref{fig:HeightDensity}). For each $i=0,1,2$, one has
\begin{equation}\label{Eq:mu-h}
\E[\mu_i(X, u)] = \E[\mu_i(X)]F_i(u)=\E[\mu_i(X)]\int_u^\infty h_i(x)dx.
\end{equation}
\end{example}

\begin{figure}[t]
\begin{center}
\begin{tabular}{cc}
\includegraphics[trim=30 10 30 10,clip,width=2.2in]{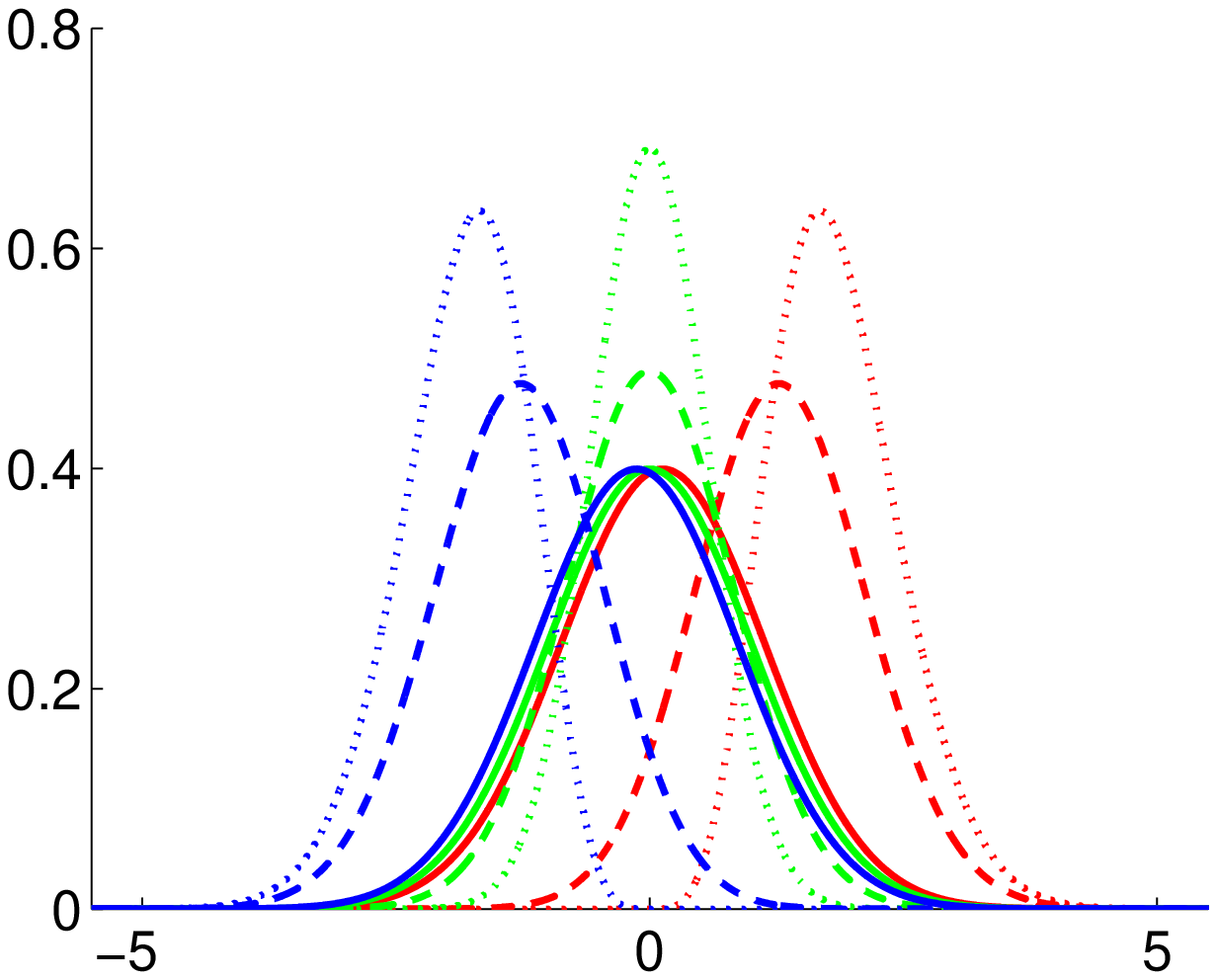} &
\includegraphics[trim=30 10 30 10,clip,width=2.2in]{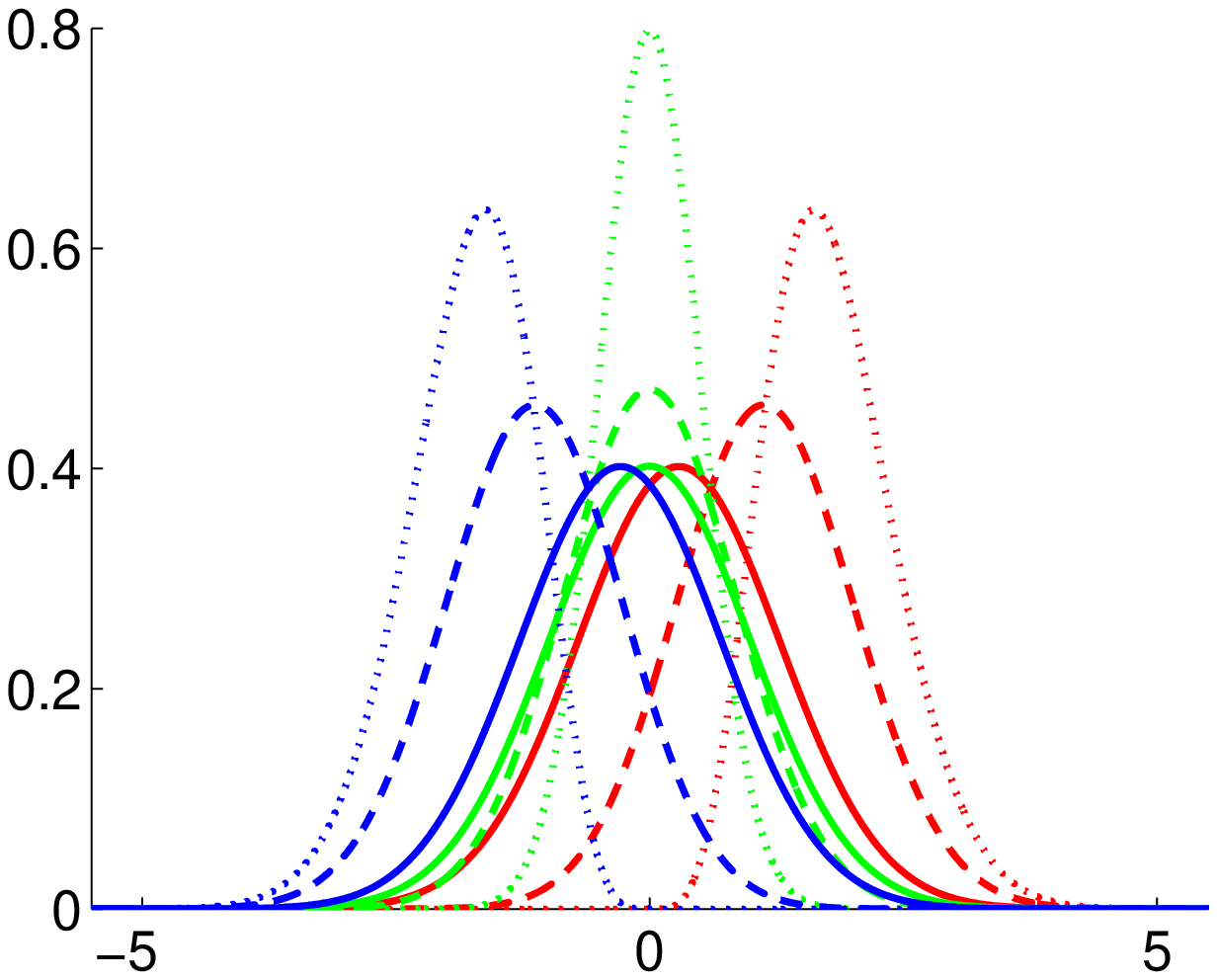}
\end{tabular}
\caption{ \label{fig:HeightDensity} \small Left panel: The height densities $h_0(x)$ (blue), $h_1(x)$ (green) and $h_2(x)$ (red) on Euclidean space $\R^2$, with parameters $\kappa=0.1$ (solid), $\kappa=1$ (dashed), and the boundary case $\kappa=\sqrt{2}$ (dotted).  Right panel: The height densities $h_0(x)$ (blue), $h_1(x)$ (green) and $h_2(x)$ (red) on the sphere $\S^2$, with parameters $\eta^2=\kappa^2=0.05$ (solid), $\eta^2=0.5$ and $\kappa^2=1$ (dashed), and the boundary case $\eta^2=1$ and $\kappa^2=3$ (dotted).}
 \end{center}
 \end{figure}

\subsection{Fyodorov's Restricted Case: $0 < \kappa^2\le 1$}\label{sec:restriction-Euclidean}

In \cite{Fyodorov04}, Fyodorov restricted his analysis to Gaussian fields in $\R^N$ whose isotropic form is valid for every dimension $N\ge 1$. It can be seen from Proposition \ref{Prop:nondegenerate-field} that $X=\{X(t), t\in \R^N\}$ is an isotropic Gaussian random field in $\R^N$ for every $N\ge 1$ if and only if
\[
\kappa^2\le \inf_{N\ge 1} \frac{N+2}{N} = 1.
\]
In that case, a $\GOI(c)$ with $c = (1-\kappa^2)/2 \ge 0$ can be written in the form \eqref{Eq:c-positive}.
Therefore, one can apply Lemma \ref{Lem:GOE for det Hessian} and the integral techniques in Fyodorov \cite{Fyodorov04} to compute $\E[\mu_i(X, u)]$.

Alternatively, under the condition $\kappa^2< 1$, we apply our general formulae in Theorem \ref{Thm:critical-points-nondegenerate-euclidean} and supporting Lemma \ref{Lem:c-positive} in the Appendix to derive immediately the following result, which is essentially the same as in Fyodorov \cite{Fyodorov04}. This shows the generality of the results in Theorem \ref{Thm:critical-points-nondegenerate-euclidean}. Notice that although the case $\kappa^2 = 1$ was not considered in \cite{Fyodorov04}, it corresponds to the pure GOE case [see \eqref{Eq:c-positive}], which is easier to handle and therefore the comparison for this case is omitted here.
\begin{proposition} Let the assumptions in Theorem \ref{Thm:critical-points-nondegenerate-euclidean} hold. Then for $i=0, \ldots, N$,
\begin{equation*}
\begin{split}
\E[\mu_i(X)] = \Gamma\left(\frac{N+1}{2}\right) \frac{2^{(N+1)/2}}{\pi^{(N+1)/2}\eta^N}\E_{\rm GOE}^{N+1}\left\{ \exp\left[-\frac{\la_{i+1}^2}{2} \right] \right\},
\end{split}
\end{equation*}
and under $\kappa^2<1$,
\begin{equation*}
\begin{split}
\E[\mu_i(X, u)]  &=\Gamma\left(\frac{N+1}{2}\right) \frac{2^{(N+1)/2}}{\pi^{(N+1)/2}\eta^N}(1-\kappa^2)^{-1/2}\\
&\quad \times  \int_u^\infty \phi(x)\E_{\rm GOE}^{N+1}\left\{ \exp\left[\frac{\la_{i+1}^2}{2} - \frac{(\la_{i+1}-\kappa x/\sqrt{2} )^2}{1-\kappa^2} \right]\right\}dx,
\end{split}
\end{equation*}
where $\E_{\rm GOE}^{N+1}$ is defined in \eqref{Eq:E-GOE}.
\end{proposition}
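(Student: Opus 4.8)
The plan is to start from the two formulae in Theorem~\ref{Thm:critical-points-nondegenerate-euclidean} and convert each $N$-dimensional GOI expectation of the form $\E_{\GOI(c)}^N[\prod_{j=1}^N|\la_j-a|\,\mathbbm{1}_{\{\la_i<a<\la_{i+1}\}}]$ into an $(N+1)$-dimensional GOE expectation, which is precisely the content of the supporting Lemma~\ref{Lem:c-positive}. For $\E[\mu_i(X)]$ the relevant parameter is $c=1/2$ with $a=0$, while for $\E[\mu_i(X,u)]$ it is $c=(1-\kappa^2)/2$ with $a=\kappa x/\sqrt{2}$; the restriction $\kappa^2<1$ enters exactly because it is what guarantees $c\ge 0$ in the second case, whereas $c=1/2>0$ holds unconditionally, explaining why the formula for $\E[\mu_i(X)]$ carries no such restriction. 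Once this conversion is in hand, the result follows by collecting normalization constants.

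The heart of the argument is an eigenvalue-insertion identity, which I would establish using the characterization~\eqref{Eq:c-positive}. Since $c\ge 0$, write the $\GOI(c)$ matrix as $M=H+\sqrt{c}\,\xi I_N$ with $H$ a GOE of size $N$ and $\xi$ an independent standard Gaussian; its eigenvalues are then $\nu_j+\sqrt{c}\,\xi$, where $\nu_1\le\cdots\le\nu_N$ are the ordered eigenvalues of $H$. This reduces the GOI expectation to a Gaussian average over $\xi$ of GOE expectations $\E_{\GOE}^N[\prod_{j=1}^N|\nu_j-a'|\,\mathbbm{1}_{\{\nu_i<a'<\nu_{i+1}\}}]$ with shifted argument $a'=a-\sqrt{c}\,\xi$. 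The key observation is that the product $\prod_{1\le j<k\le N}|\nu_j-\nu_k|$ from the GOE density~\eqref{Eq:GOE density}, multiplied by $\prod_{j=1}^N|\nu_j-a'|$, is exactly the full Vandermonde product over the $N+1$ points $\{\nu_1,\dots,\nu_N,a'\}$; matching the weight $\exp(-\frac{1}{2}\sum_j\nu_j^2)$ against the $(N+1)$-point weight $\exp(-\frac{1}{2}(\sum_j\nu_j^2+a'^2))$ identifies the integral, up to the factor $e^{a'^2/2}$ and the ratio $K_{N+1}/K_N$, with the marginal density at $a'$ of the $(i+1)$-th ordered eigenvalue of an $(N+1)$-dimensional GOE. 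The index indicator $\mathbbm{1}_{\{\nu_i<a'<\nu_{i+1}\}}$ is what forces $a'$ into the $(i+1)$-th slot, so the resulting $(N+1)$-GOE expectation is indeed over $\la_{i+1}$.

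To assemble $\E[\mu_i(X)]$ I would take $a=0$, $c=1/2$, so that $a'=-\xi/\sqrt{2}$; pushing the law of $\xi$ through the insertion identity combines the weight $e^{a'^2/2}$ with the Gaussian density to give $e^{-\la_{i+1}^2/2}$, yielding $\E_{\GOE}^{N+1}[e^{-\la_{i+1}^2/2}]$, with prefactor $K_{N+1}/K_N=\sqrt{2}\,\Gamma((N+1)/2)$ read off from~\eqref{Eq:normalization constant}. For $\E[\mu_i(X,u)]$, with $c=(1-\kappa^2)/2>0$ under $\kappa^2<1$ and $a=\kappa x/\sqrt{2}$, the same identity applies and the remaining $\xi$-integral is collapsed by writing the marginal density as $\E_{\GOE}^{N+1}[\delta(\la_{i+1}-a')]$ and integrating out $\xi$ via the constraint $a'=\kappa x/\sqrt{2}-\sqrt{c}\,\xi$; the Jacobian $1/\sqrt{c}$ produces the $(1-\kappa^2)^{-1/2}$ prefactor, and the Gaussian density together with $e^{a'^2/2}$ produces the exponent $\la_{i+1}^2/2-(\la_{i+1}-\kappa x/\sqrt{2})^2/(1-\kappa^2)$. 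Collecting all powers of $2$ and $\pi$ from $2^{N/2}/\pi^{N/2}$, from $K_{N+1}/K_N$, and from the Gaussian normalizations gives exactly $2^{(N+1)/2}/\pi^{(N+1)/2}$.

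The step I expect to be the main obstacle is the insertion identity itself, and within it the careful tracking of the index indicator: one must verify that $\mathbbm{1}_{\{\la_i<a<\la_{i+1}\}}$ corresponds precisely to inserting $a$ as the $(i+1)$-th smallest among $N+1$ eigenvalues, so that the output is the marginal of $\la_{i+1}$ rather than a neighbouring order statistic. The surrounding arithmetic—the ratio $K_{N+1}/K_N$ and the $\sqrt{c}$ Jacobian from collapsing the delta function—is routine but must be executed exactly to land on the stated constants $\Gamma((N+1)/2)\,2^{(N+1)/2}\pi^{-(N+1)/2}\eta^{-N}$ and $(1-\kappa^2)^{-1/2}$.
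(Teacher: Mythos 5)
Your proposal matches the paper's proof: the proposition is obtained exactly as you describe, by specializing Theorem \ref{Thm:critical-points-nondegenerate-euclidean} to $c=1/2$, $a=0$ (resp.\ $c=(1-\kappa^2)/2>0$, $a=\kappa x/\sqrt{2}$) and converting each $N$-dimensional GOI expectation into an $(N+1)$-dimensional GOE expectation via Lemma \ref{Lem:c-positive}, with the constants $\Gamma(\frac{N+1}{2})/\sqrt{\pi c}$ accounting for the stated prefactors. Your sketch of the insertion identity itself --- writing the $\GOI(c)$ matrix as $H+\sqrt{c}\,\xi I_N$ and absorbing $\prod_j|\nu_j-a'|$ into an $(N+1)$-point Vandermonde with $a'$ forced into the $(i+1)$-th slot --- is the same mechanism as the paper's proof of that lemma, there phrased as an auxiliary Gaussian integral followed by two explicit changes of variables.
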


\subsection{The Boundary Case of Random Laplacian Eigenfunctions: $\kappa^2 = (N+2)/N$}
Here, we shall consider the case when the condition $({\bf A})$ does not hold.

\begin{proposition}\label{Prop:degenerate-field}
\ Let the assumptions in Lemma \ref{Lem:cov of isotropic Euclidean} hold. Then $(X(t), \nabla X(t), X_{ij}(t), 1\leq i\leq j\leq N)$ is degenerate if and only if $\kappa^2=(N+2)/N$, which is equivalent to the field satisfying the partial differential equation
\begin{equation}\label{Eq: degenerate}
\sum_{i=1}^N X_{ii}(t) = 2N\rho' X(t).
\end{equation}
\end{proposition}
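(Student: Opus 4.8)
The plan is to characterize exactly when the Gaussian vector $(X(t), X_{11}(t), \ldots, X_{NN}(t))$ degenerates, and then to translate that linear degeneracy into the stated PDE. By the independence structure already established in the proof of Proposition \ref{Prop:nondegenerate-field}, the full vector $(X(t), \nabla X(t), X_{ij}(t), 1\le i\le j\le N)$ is degenerate if and only if the sub-vector $(X(t), X_{11}(t), \ldots, X_{NN}(t))$ is degenerate, since $\nabla X(t)$ and the off-diagonal entries $(X_{ij}(t), i<j)$ are nondegenerate (their variances are $-2\rho'>0$ and $4\rho''>0$) and independent of everything relevant. This reduces the first equivalence to the covariance determinant computation already performed: Proposition \ref{Prop:nondegenerate-field} showed $\det{\rm Cov}(X, X_{11}, \ldots, X_{NN}) = (1 - \tfrac{N}{N+2}\kappa^2)\det(\mathbf D)$, so degeneracy is equivalent to $1 - \tfrac{N}{N+2}\kappa^2 = 0$, i.e. $\kappa^2 = (N+2)/N$.

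The main content is the equivalence with the PDE \eqref{Eq: degenerate}. Here I would work out explicitly the unique (up to scale) linear combination that vanishes when the covariance matrix is singular. The natural candidate is $Y(t) := \sum_{i=1}^N X_{ii}(t) - 2N\rho' X(t)$, the claimed defect. To show degeneracy implies $Y \equiv 0$ almost surely, it suffices to show ${\rm Var}(Y) = 0$ precisely when $\kappa^2 = (N+2)/N$. Using Lemma \ref{Lem:cov of isotropic Euclidean}, I would compute each piece: ${\rm Var}\big(\sum_i X_{ii}\big) = \sum_{i,k}\E[X_{ii}X_{kk}] = 4\rho''(N\cdot 3 + N(N-1)) = 4\rho''N(N+2)$; the cross term $-4N\rho'\,{\rm Cov}\big(\sum_i X_{ii}, X\big) = -4N\rho' \cdot N\cdot(2\rho') = -8N^2\rho'^2$ (noting $\E[X_{ii}X]=2\rho'$); and $4N^2\rho'^2{\rm Var}(X) = 4N^2\rho'^2$. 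Collecting, ${\rm Var}(Y) = 4\rho''N(N+2) - 8N^2\rho'^2 + 4N^2\rho'^2 = 4N\big[(N+2)\rho'' - N\rho'^2\big]$, which equals zero exactly when $(N+2)\rho'' = N\rho'^2$, i.e. $\kappa^2 = \rho'^2/\rho'' = (N+2)/N$. A mean-zero Gaussian with zero variance is almost surely zero, giving the PDE.

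For the converse, if the field satisfies \eqref{Eq: degenerate} as an almost-sure identity, then $Y(t) = 0$ a.s., so ${\rm Var}(Y)=0$, which by the same computation forces $\kappa^2 = (N+2)/N$; and this in turn forces the covariance determinant to vanish, establishing degeneracy. Thus all three conditions are equivalent.

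I expect the only delicate point to be the bookkeeping in the variance computation, in particular correctly accounting for the diagonal versus off-diagonal contributions in $\E\{X_{ij}X_{kl}\} = 4\rho''(\delta_{ij}\delta_{kl}+\delta_{ik}\delta_{jl}+\delta_{il}\delta_{jk})$ when summing over $i=k$ and $i\ne k$; the $\delta_{ij}\delta_{kl}$ term contributes to every pair $(X_{ii},X_{kk})$ while the other two contribute only on the diagonal $i=k$, which is what produces the $3$ versus $1$ split and ultimately the factor $N(N+2)$. There is no conceptual obstacle: the whole statement is a clean restatement of a vanishing-variance criterion, and the PDE is simply the explicit form of the defect vector already lurking implicitly in the rank-deficiency computed in Proposition \ref{Prop:nondegenerate-field}. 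One should also remark that \eqref{Eq: degenerate} is precisely the Helmholtz equation $\Delta X = -\lambda X$ with $\lambda = -2N\rho' > 0$ (since $\rho' < 0$), which is the promised connection to random Laplacian eigenfunctions, though proving that identification is not needed for the proposition itself.
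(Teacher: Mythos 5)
Your proof is correct, and its skeleton matches the paper's: both reduce to the sub-vector $(X(t), X_{11}(t), \ldots, X_{NN}(t))$ via the independence structure from Proposition \ref{Prop:nondegenerate-field}, and both identify $Y(t) = \sum_{i=1}^N X_{ii}(t) - 2N\rho' X(t)$ as the defect. Where you differ is in how you prove that this combination vanishes. The paper writes down the covariance matrix $\Sigma$ of $(X, X_{11}, \ldots, X_{NN})$ at the boundary value $\rho'' = \tfrac{N}{N+2}\rho'^2$, checks that its kernel is spanned by $(-2N\rho', 1, \ldots, 1)^T$, and then passes through the spectral decomposition $\Sigma = V\La V^T$ to conclude that the corresponding linear combination of the Gaussian vector is zero; the converse direction is handled separately from Lemma \ref{Lem:cov of isotropic Euclidean}. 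You instead compute ${\rm Var}(Y) = 4N\bigl[(N+2)\rho'' - N\rho'^2\bigr]$ in closed form for \emph{arbitrary} admissible $\rho', \rho''$, which delivers both implications at once: the variance vanishes exactly when $\kappa^2 = (N+2)/N$, and a centered Gaussian with zero variance is a.s.\ zero. Your route is more elementary (no diagonalization, no need to verify that the proposed vector exhausts the kernel) and makes the equivalence between the boundary condition and the PDE symmetric, at the cost of having to guess the candidate $Y$ up front --- but that guess is equally present in the paper's choice of eigenvector, so nothing is lost. The bookkeeping in your variance computation ($12\rho''$ on the diagonal versus $4\rho''$ off it, giving $4\rho'' N(N+2)$) checks out against Lemma \ref{Lem:cov of isotropic Euclidean}.
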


Relation \eqref{Eq: degenerate} is also known as \emph{Helmholtz equation}. It indicates that $X(t)$ is an eigenfunction of the Laplace operator with eigenvalue $2N \rho' < 0$. This relation will be critical for deriving the formula of $\E[\mu_i(X, u)]$ in the boundary case in Theorem \ref{Thm:critical-points-degenerate-euclidean}.

\begin{proof}\
By Proposition \ref{Prop:nondegenerate-field} and its proof, we only need to consider $(X(t), X_{11}(t), \ldots,  X_{NN}(t))$ and show the equivalence between $\kappa^2=(N+2)/N$ and \eqref{Eq: degenerate}. When $\kappa^2=(N+2)/N$, that is $\rho''=\frac{N}{N+2}\rho'^2$, we have
\begin{equation*}
\begin{split}
\Sigma:={\rm Cov}(X(t), X_{11}(t), \ldots, X_{NN}(t)) &= \left(
                                                       \begin{array}{ccccc}
                                                         1 & 2\rho' & 2\rho' & \cdots & 2\rho' \\
                                                         2\rho' & \frac{12N}{N+2}\rho'^2 & \frac{4N}{N+2}\rho'^2 & \cdots & \frac{4N}{N+2}\rho'^2 \\
                                                         2\rho' & \frac{4N}{N+2}\rho'^2 & \frac{12N}{N+2}\rho'^2 & \cdots & \frac{4N}{N+2}\rho'^2 \\
                                                         \vdots & \vdots & \vdots & \vdots & \vdots \\
                                                         2\rho' & \frac{4N}{N+2}\rho'^2 & \frac{4N}{N+2}\rho'^2 & \cdots & \frac{12N}{N+2}\rho'^2 \\
                                                       \end{array}
                                                     \right).
\end{split}
\end{equation*}
It can be check that all eigenvectors of $\Sigma$ corresponding to eigenvalue 0 can be represented as $a(-2N\rho', 1, \ldots, 1)^T$, where $a\in \R$. We can diagonalize the matrix by $\Sigma=V\La V^T$, where $\La={\rm diag}(0, \la_2, \ldots, \la_N)$ (here $\la_2, \ldots, \la_N$ are nonzero eigenvalues of $\Sigma$) and $V$ is an orthogonal matrix.

Let $Z\sim \mathcal{N}(0, I_N)$. Notice that, we can write
\begin{equation*}
\begin{split}
(X(t), X_{11}(t), \ldots, X_{NN}(t))^T = \Sigma^{1/2}Z = V\La^{1/2} V^T Z = V\La^{1/2}W,
\end{split}
\end{equation*}
where $W=V^T Z\sim \mathcal{N}(0, I_N)$. This implies
\begin{equation*}
\begin{split}
V^T(X(t), X_{11}(t), \ldots, X_{NN}(t))^T = \La^{1/2}W,
\end{split}
\end{equation*}
and hence
\begin{equation*}
\begin{split}
(-2N\rho', 1, \ldots, 1)(X(t), X_{11}(t), \ldots, X_{NN}(t))^T = 0.
\end{split}
\end{equation*}
Therefore \eqref{Eq: degenerate} holds. Conversely, it follows directly from Lemma \ref{Lem:cov of isotropic Euclidean} that \eqref{Eq: degenerate} implies $\kappa^2=(N+2)/N$.
\end{proof}

Due to Proposition \ref{Prop:degenerate-field}, we shall make use of the following condition in this section.
\begin{itemize}
\item[$({\bf A'})$] $\kappa^2 = (N+2)/N$ (or equivalently $\sum_{i=1}^N X_{ii}(t) = 2N\rho' X(t)$).
\end{itemize}

We show below two examples of isotropic Gaussian fields satisfying \eqref{Eq: degenerate}.
\begin{example}
The cosine random process on $\R$ is defined by
\[
f(t) = \xi \cos(\omega t) + \xi' \sin(\omega t),
\]
where $\xi$ and $\xi'$ are independent, standard Gaussian variables and the $\omega$ is a positive constant. It is an isotropic, unit-variance, smooth Gaussian field with covariance $C(t) = \cos(\omega t)$. In particular, $f''(t)=-\omega^2 f(t)$.
\end{example}

\begin{example}
Let $r\S^{N-1}=\{t\in \R^N, t/r \in \S^{N-1}\}$, where $r>0$.
For $t \in \R^N$, define the random field
\begin{equation*}
\begin{split}
X(t) 
&= a\int_{r\S^{N-1}} \cos\langle \omega, t\rangle \,dB(\omega) + a\int_{r\S^{N-1}} \sin\langle \omega, t\rangle \,dW(\omega),
\end{split}
\end{equation*}
where $a$ is a nonzero constant, $dB(\omega)$ and $dW(\omega)$ are independent zero-mean, unit-variance, Gaussian white noise fields. Then $X(t)$ is a zero-mean, unit-variance, isotropic Gaussian random field satisfying
\[
\sum_{i=1}^N X_{ii}(t) = - r^2 X(t).
\]
The covariance of $X$ is given by
\[
C(t,s)=\E[X(t)X(s)]=a^2\int_{r\S^{N-1}} \cos\langle \omega, t-s\rangle \,d\omega, \quad t,s\in \R^N,
\]
which is isotropic.
\end{example}

\subsection{Expected Number and Height Distribution of Critical Points}

At the boundary $\kappa^2 = (N+2)/N$, by Lemma \ref{lemma:GOI-c}, the Hessian conditional on the field in Lemma \ref{Lem:GOE for det Hessian}(ii) is a degenerate symmetric random matrix. By Lemma \ref{lemma:GOI}, the density of eigenvalues of such a random matrix degenerates as well. Consequently, the technique employed to obtain $\E[\mu_i(X, u)]$ in the proof of Theorem \ref{Thm:critical-points-degenerate-euclidean} is no longer applicable. Instead, we use below a different technique by applying the Helmholtz equation \eqref{Eq: degenerate}.

\begin{theorem}\label{Thm:critical-points-degenerate-euclidean} Let $\{X(t), t\in \R^N\}$ be a centered, unit-variance, smooth isotropic Gaussian random field satisfying condition $({\bf A'})$. Then for $i=0, \ldots, N$, $\E[\mu_i(X)]$ is given by \eqref{Eq:critial-Euclidean} and
\begin{equation*}
\begin{split}
\E[\mu_i(X, u)]  &=\frac{2^{N/2}}{\pi^{N/2}\eta^N} \E_{\GOI(1/2)}^N \left[\prod_{j=1}^N|\la_j|\mathbbm{1}_{\{\la_i<0<\la_{i+1}\}}\mathbbm{1}_{\{\sum_{j=1}^N\la_j/N\le -\sqrt{(N+2)/(2N)} u\}}\right],
\end{split}
\end{equation*}
where $\E_{\GOI(c)}^N$ is defined in \eqref{Eq:E-GOI} and $\la_0$ and $\la_{N+1}$ are regarded respectively as $-\infty$ and $\infty$ for consistency.
\end{theorem}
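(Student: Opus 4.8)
The plan is to sidestep conditioning on the value of the field, which is precisely what fails at the boundary: by Lemma \ref{Lem:GOE for det Hessian}(ii) the Hessian conditional on $X(t)=x$ is a degenerate $\GOI$ matrix when $\kappa^2=(N+2)/N$, so its conditional eigenvalue density no longer exists. The way out is supplied by the Helmholtz relation \eqref{Eq: degenerate}: under $({\bf A'})$ the field is a deterministic linear functional of the Hessian, namely $X(t)=\frac{1}{2N\rho'}\sum_{i=1}^N X_{ii}(t)=\frac{1}{2N\rho'}{\rm tr}(\nabla^2 X(t))$. Hence the event $\{X(t)\ge u\}$ can be rewritten purely in terms of ${\rm tr}(\nabla^2 X(t))$, and the entire computation can be carried out with the \emph{unconditional} Hessian, whose eigenvalue law is the nondegenerate $\GOI(1/2)$ density of Lemma \ref{Lem:GOE for det Hessian}(i).

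Concretely, I would first apply the Kac-Rice metatheorem (Theorem 11.2.1 in \cite{AT07}) over the unit-area disc $D$. By Lemma \ref{Lem:cov of isotropic Euclidean} the gradient $\nabla X(t)$ is independent of the pair $(X(t),\nabla^2 X(t))$, so conditioning on $\nabla X(t)=0$ leaves their joint law unchanged, and the density of $\nabla X(t)$ at $0$ equals $(2\pi)^{-N/2}(-2\rho')^{-N/2}$. This yields
\begin{equation*}
\E[\mu_i(X,u)]=\frac{1}{(2\pi)^{N/2}(-2\rho')^{N/2}}\,\E\big[|{\rm det}(\nabla^2 X(t))|\,\mathbbm{1}_{\{X(t)\ge u\}}\mathbbm{1}_{\{{\rm index}(\nabla^2 X(t))=i\}}\big].
\end{equation*}
I then substitute the Helmholtz relation into the first indicator and invoke Lemma \ref{Lem:GOE for det Hessian}(i) to replace $\nabla^2 X(t)$ by $\sqrt{8\rho''}\,\widetilde M$ with $\widetilde M\sim\GOI(1/2)$. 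Writing $\la_1\le\cdots\le\la_N$ for the ordered eigenvalues of $\widetilde M$, the three pieces become ${\rm det}(\nabla^2 X(t))=(8\rho'')^{N/2}\prod_{j=1}^N\la_j$; the index condition becomes $\{\la_i<0<\la_{i+1}\}$; and, since ${\rm tr}(\nabla^2 X(t))=\sqrt{8\rho''}\sum_{j=1}^N\la_j$ with $\rho'<0$, the event $\{X(t)\ge u\}=\{{\rm tr}(\nabla^2 X(t))\le 2N\rho' u\}$ turns into the spectral constraint $\{\sum_{j=1}^N\la_j/N\le -\sqrt{(N+2)/(2N)}\,u\}$.

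This last translation is the one place that uses the boundary identity $\rho''=\frac{N}{N+2}\rho'^2$ (equivalent to $({\bf A'})$): evaluating $2N\rho' u/\sqrt{8\rho''}$ with it produces exactly the threshold $-\sqrt{(N+2)/(2N)}\,u$, and the same identity together with $\eta^2=-\rho'/\rho''$ collapses the prefactor $(8\rho'')^{N/2}/[(2\pi)^{N/2}(-2\rho')^{N/2}]=(2\rho''/(-\pi\rho'))^{N/2}$ to $2^{N/2}/(\pi^{N/2}\eta^N)$. Recognizing the resulting integral as $\E_{\GOI(1/2)}^N[\,\cdot\,]$ gives the stated formula for $\E[\mu_i(X,u)]$; letting $u\to-\infty$ makes the trace indicator identically one and recovers \eqref{Eq:critial-Euclidean} for $\E[\mu_i(X)]$. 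The genuinely new ingredient—and the obstacle that the degeneracy forces us around—is the opening observation that $({\bf A'})$ lets us trade the now-forbidden conditioning on $X(t)$ for a regular constraint on the spectrum of the unconditioned $\GOI(1/2)$ Hessian; everything after that is the bookkeeping and constant-matching above.
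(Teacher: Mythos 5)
Your proposal is correct and follows essentially the same route as the paper: apply the Kac--Rice metatheorem, use the Helmholtz identity from condition $({\bf A'})$ to convert the event $\{X(t)\ge u\}$ into the trace constraint $\{{\rm Tr}(\nabla^2 X(t))\le 2N\rho' u\}$ on the \emph{unconditional} Hessian, and then pass to the nondegenerate $\GOI(1/2)$ eigenvalue law via Lemma \ref{Lem:GOE for det Hessian}(i). Your constant-matching and the threshold $-\sqrt{(N+2)/(2N)}\,u$ agree with the paper's computation, so nothing further is needed.
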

\begin{proof} \ We only need to prove the second part since $\E[\mu_i(X)]$ follows directly from Theorem \ref{Thm:critical-points-nondegenerate-euclidean} and $({\bf A'})$. By the Kac-Rice metatheorem, condition $({\bf A'})$ and Lemmas \ref{Lem:cov of isotropic Euclidean} and \ref{Lem:GOE for det Hessian},
\begin{equation*}
\begin{split}
\E[\mu_i(X, u)]  &=\frac{1}{(2\pi)^{N/2} (-2\rho')^{N/2}} \E[ |\text{det} (\nabla^2 X(t))|\mathbbm{1}_{\{\text{index} (\nabla^2 X(t)) = i\}}\mathbbm{1}_{\{X(t)\ge u\}}]\\
&=\frac{1}{(2\pi)^{N/2} (-2\rho')^{N/2}} \E[ |\text{det} (\nabla^2 X(t))|\mathbbm{1}_{\{\text{index} (\nabla^2 X(t)) = i\}}\mathbbm{1}_{\{\sum_{i=1}^N X_{ii}(t) \le 2N\rho'u\}}]\\
&=\frac{1}{(2\pi)^{N/2} (-2\rho')^{N/2}} \E[ |\text{det} (\nabla^2 X(t))|\mathbbm{1}_{\{\text{index} (\nabla^2 X(t)) = i\}}\mathbbm{1}_{\{{\rm Tr}(\nabla^2 X(t)) \le 2N\rho'u\}}]\\
&=\left(\frac{2\rho''}{-\pi \rho'}\right)^{N/2} \E_{\GOI(1/2)}^N \left[\left(\prod_{j=1}^N|\la_j| \right) \mathbbm{1}_{\{\la_i<0<\la_{i+1}\}}\mathbbm{1}_{\{\sqrt{8\rho''}\sum_{j=1}^N\la_j\le 2N\rho'u\}}\right]\\
&=\frac{2^{N/2}}{\pi^{N/2}\eta^N} \E_{\GOI(1/2)}^N \left[\left(\prod_{j=1}^N|\la_j| \right) \mathbbm{1}_{\{\la_i<0<\la_{i+1}\}}\mathbbm{1}_{\{\sum_{j=1}^N\la_j/N\le -\sqrt{(N+2)/(2N)} u\}}\right],
\end{split}
\end{equation*}
where the last line is due to $\eta=\sqrt{-\rho'}/\sqrt{\rho''}$ and $\kappa=-\rho'/\sqrt{\rho''}=\sqrt{(N+2)/N}$ under condition $({\bf A'})$.
\end{proof}

The following result is an immediate consequence of \eqref{Eq:F-ratio} and Theorem \ref{Thm:critical-points-degenerate-euclidean}.
\begin{corollary} \label{Cor:HeightDist-Euclidean-Degenerate}
 Let the assumptions in Theorem \ref{Thm:critical-points-degenerate-euclidean} hold. Then for $i=0, \ldots, N$,
\begin{equation*}
\begin{split}
F_i(u)  &=\frac{\E_{\GOI(1/2)}^N \left[\prod_{j=1}^N|\la_j|\mathbbm{1}_{\{\la_i<0<\la_{i+1}\}}\mathbbm{1}_{\{\sum_{j=1}^N\la_j/N\le -\sqrt{(N+2)/(2N)} u\}}\right]}{\E_{\GOI(1/2)}^N \left[\prod_{j=1}^N|\la_j|\mathbbm{1}_{\{\la_i<0<\la_{i+1}\}}\right] }.
\end{split}
\end{equation*}
\end{corollary}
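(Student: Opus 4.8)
The plan is to read off the result directly from the ratio representation \eqref{Eq:F-ratio}, $F_i(u) = \E[\mu_i(X,u)]/\E[\mu_i(X)]$, by substituting the two expressions supplied by Theorem \ref{Thm:critical-points-degenerate-euclidean}. First I would recall that, under condition $({\bf A'})$, that theorem gives $\E[\mu_i(X)]$ through \eqref{Eq:critial-Euclidean} and gives $\E[\mu_i(X,u)]$ by the same $\GOI(1/2)$ expectation carrying the additional indicator $\mathbbm{1}_{\{\sum_{j=1}^N \la_j/N \le -\sqrt{(N+2)/(2N)}\, u\}}$. Both numerator and denominator share the identical prefactor $2^{N/2}/(\pi^{N/2}\eta^N)$, so forming the quotient cancels this factor, and with it all dependence on $\eta$, leaving exactly the ratio of the two $\E_{\GOI(1/2)}^N$ expectations displayed in the statement. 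This is a one-line cancellation once Theorem \ref{Thm:critical-points-degenerate-euclidean} is available, and I do not anticipate any computational difficulty.

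The only point deserving justification is the validity of \eqref{Eq:F-ratio} in the present boundary regime. That identity is imported from \cite{CS15}, where it was established for local maxima under Fyodorov's restriction; as remarked in the text following \eqref{Eq:F-ratio}, the same small-ball limiting argument carries over to critical points of arbitrary index $i$. Crucially, that argument rests only on the Kac-Rice localization of $\E[\mu_i(X,u)]$ and does not invoke the nondegeneracy condition $({\bf A})$, so it continues to apply when $({\bf A'})$ holds and the Hessian conditional on the field degenerates. Hence \eqref{Eq:F-ratio} remains legitimate here, and the substitution above is warranted.

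If any step is to be singled out as the main obstacle, it is this transfer of \eqref{Eq:F-ratio} to the degenerate case rather than the algebra, since the cancellation itself is immediate by inspection of the two formulae in Theorem \ref{Thm:critical-points-degenerate-euclidean}; one need only confirm that the two $\GOI(1/2)$ expectations are built from the same integrand and the same normalization apart from the trace indicator, which is manifest.
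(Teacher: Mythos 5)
Your proposal is correct and matches the paper's reasoning exactly: the paper presents this corollary as an immediate consequence of \eqref{Eq:F-ratio} and Theorem \ref{Thm:critical-points-degenerate-euclidean}, obtained by forming the quotient and cancelling the common prefactor $2^{N/2}/(\pi^{N/2}\eta^N)$. Your additional remark on why \eqref{Eq:F-ratio} survives in the degenerate regime is a sensible (and correct) elaboration of what the paper leaves implicit.
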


\begin{example}\label{Example:critical-points-degenerate-euclidean} Let the assumptions in Theorem \ref{Thm:critical-points-degenerate-euclidean} hold and let $N=2$, implying $\kappa^2=2$. Then
\begin{equation*}
\E[\mu_2(X)] = \E[\mu_0(X)] = \frac{\E[\mu_1(X)]}{2} = \frac{1}{\sqrt{3}\pi\eta^2}
\end{equation*}
and
\begin{equation*}
\begin{split}
h_2(x) &=\frac{2\sqrt{3}}{\sqrt{2\pi}}\left[(x^2-1)e^{-x^2/2} + e^{-3x^2/2}\right]\mathbbm{1}_{\{x\ge 0\}},\\
h_0(x) &=\frac{2\sqrt{3}}{\sqrt{2\pi}}\left[(x^2-1)e^{-x^2/2} + e^{-3x^2/2}\right]\mathbbm{1}_{\{x\le 0\}},\\
h_1(x) &=\frac{\sqrt{3}}{\sqrt{2\pi}}e^{-\frac{3x^2}{2}};
\end{split}
\end{equation*}
see Figure \ref{fig:HeightDensity}. The expected number of critical points $\E[\mu_i(X, u)]$ for each $i=0,1,2$ can be obtained from the densities in the same way as in Example \ref{Example:critical-points-nondegenerate-euclidean}.
\end{example}

It can be seen that the densities of height distributions of critical points in Example \ref{Example:critical-points-degenerate-euclidean} are the limits of those in Example \ref{Example:critical-points-nondegenerate-euclidean} when $\kappa^2\uparrow 2$. This is because, the expected number of critical points can be written by the Kac-Rice formula, which is continuous with respect to the parameters of the covariance of the field.

\section{Isotropic Gaussian Random Fields on Spheres}\label{sec:isotropic-sphere}
Let $\mathbb{S}^N$ denote the $N$-dimensional unit sphere and let $X= \{X(t), t\in \mathbb{S}^N\}$ be a centered, unit-variance, smooth isotropic Gaussian random field on $\mathbb{S}^N$. Due to isotropy, we may write the covariance function of $X$ as $C(\l t, s\r)$, $t,s\in \mathbb{S}^N$, where $C(\cdot): [-1,1] \rightarrow \R$ is a real function and $\l \cdot, \cdot \r$ is the inner product in $\R^{N+1}$. See \cite{CS15,ChengXiao14} for more results on the covariance function of an isotropic Gaussian field on $\mathbb{S}^N$. Define
\begin{equation}\label{Def:C' and C''}
C'=C'(1), \quad C''=C''(1), \quad \eta=\sqrt{C'}/\sqrt{C''}, \quad \kappa=C'/\sqrt{C''}.
\end{equation}
Throughout this paper, we always assume $C'C''\ne 0$, which is equivalent to the nondegeneracy of the first and second derivatives of the field (see Lemma \ref{Lem:joint distribution sphere} below).

Similarly to the Euclidean case, for an orthonormal frame $\{E_i\}_{1\le i\le N}$ on $\mathbb{S}^N$, let $X_i(t)=E_i X(t)$ and $X_{ij}(t)=E_iE_j X(t)$. The following result can be derived easily by elementary calculations; see also \cite{AuffingerPhD}.
\begin{lemma}\label{Lem:joint distribution sphere}
Let $\{X(t), t\in \S^N\}$ be a centered, unit-variance, smooth isotropic Gaussian random field. Then for each $t\in \R^N$ and $i$, $j$, $k$, $l\in\{1,\ldots, N\}$,
\begin{equation*}
\begin{split}
&\E\{X_i(t)X(t)\}=\E\{X_i(t)X_{jk}(t)\}=0, \quad \E\{X_i(t)X_j(t)\}=-\E\{X_{ij}(t)X(t)\}=C'\delta_{ij},\\
&\E\{X_{ij}(t)X_{kl}(t)\}=C''(\delta_{ik}\delta_{jl} + \delta_{il}\delta_{jk}) + (C''+C')\delta_{ij}\delta_{kl},
\end{split}
\end{equation*}
where $C'$ and $C''$ are defined in \eqref{Def:C' and C''}.
\end{lemma}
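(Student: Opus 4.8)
The plan is to realize each covariance in the statement as a mixed partial derivative of the covariance function $C(\langle t,s\rangle)$ and then evaluate on the diagonal $s=t$. Since the field satisfies the smoothness hypothesis (condition (11.3.1) of \cite{AT07}), differentiation and expectation commute, so the covariance of any pair of derivatives of the field equals the corresponding iterated directional derivative of $C(\langle t,s\rangle)$ in the $t$- and $s$-variables. To make the intrinsic (covariant) derivatives concrete I would work in geodesic normal coordinates: fix orthonormal tangent bases $\{e_a\}\subset T_t\S^N$ and $\{f_b\}\subset T_s\S^N$ aligned with the frame $\{E_i\}$, and write the exponential maps as $p(x)=\cos|x|\,t+\frac{\sin|x|}{|x|}\sum_a x_a e_a$ and $q(y)=\cos|y|\,s+\frac{\sin|y|}{|y|}\sum_b y_b f_b$. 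At the centers the Christoffel symbols vanish, so the covariant Hessian coincides with the ordinary second coordinate derivative at $x=0$ (resp. $y=0$), and the entire problem reduces to partial derivatives at the origin of the single scalar $\Psi(x,y):=\langle p(x),q(y)\rangle$ composed with $C$.

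The technical core is the Taylor expansion of $\Psi$. Using $\cos|x|=1-\tfrac12|x|^2+O(|x|^4)$ and $\sin|x|/|x|=1-\tfrac16|x|^2+O(|x|^4)$, I would record the partials of $\Psi$ at the origin: writing $P=\langle t,s\rangle$, $B_i=\langle e_i,s\rangle$, $A_k=\langle t,f_k\rangle$ and $G_{ik}=\langle e_i,f_k\rangle$, one finds $\partial_{x_i}\Psi=B_i$, $\partial_{y_k}\Psi=A_k$, $\partial_{x_ix_j}\Psi=-P\delta_{ij}$, $\partial_{y_ky_l}\Psi=-P\delta_{kl}$, $\partial_{x_iy_k}\Psi=G_{ik}$, the third-order partials $\partial_{x_ix_jy_k}\Psi=-\delta_{ij}A_k$ and $\partial_{x_iy_ky_l}\Psi=-\delta_{kl}B_i$, and the fourth-order partial $\partial_{x_ix_jy_ky_l}\Psi=P\,\delta_{ij}\delta_{kl}$. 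The curvature of the sphere is encoded precisely in the $-P\delta_{ij}$ terms coming from $\cos|x|$; this is the one place where the argument genuinely departs from the Euclidean Lemma \ref{Lem:cov of isotropic Euclidean}, and keeping these corrections straight is the main thing to be careful about.

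Finally I would assemble the covariances by the multivariate Faà di Bruno formula applied to $C(\Psi)$ and then specialize to the diagonal $s=t$ with $f_b=e_b$, where $P=1$, $B_i=A_k=\langle e_i,t\rangle=0$ and $G_{ik}=\delta_{ik}$. The vanishing of $A$ and $B$ is what makes the result clean: in the partition sum, every block of odd size contributes a factor $A$ or $B$ and hence drops out, so for the top-order term $\E[X_{ij}(t)X_{kl}(t)]$ only the single block $\{x_i,x_j,y_k,y_l\}$ (giving $C'\,\partial_{x_ix_jy_ky_l}\Psi=C'\delta_{ij}\delta_{kl}$) and the three partitions into two pairs (giving $C''(\delta_{ij}\delta_{kl}+\delta_{ik}\delta_{jl}+\delta_{il}\delta_{jk})$) survive, summing to the stated $C''(\delta_{ik}\delta_{jl}+\delta_{il}\delta_{jk})+(C''+C')\delta_{ij}\delta_{kl}$. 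The remaining identities come out as lower-order instances of the same calculation: $\E[X_iX_j]=C'G_{ij}=C'\delta_{ij}$, $\E[X_{ij}X]=C'\cdot(-P\delta_{ij})=-C'\delta_{ij}$, while $\E[X_iX]$ and $\E[X_iX_{jk}]$ vanish because each of their surviving partition terms carries a factor $A$ or $B$. I expect the only real obstacle to be the derivation and bookkeeping of the curvature terms in the second- and fourth-order partials of $\Psi$; once those are in hand, the diagonal specialization and the partition count are routine.
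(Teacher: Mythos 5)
Your proposal is correct and carries out exactly the ``elementary calculations'' that the paper omits (it only cites Auffinger's thesis): realizing the covariances as mixed partials of $C(\langle p(x),q(y)\rangle)$ in geodesic normal coordinates, where the curvature enters only through the $-P\delta_{ij}$ and $P\delta_{ij}\delta_{kl}$ terms of $\Psi$, and the diagonal specialization $A=B=0$, $G=I$ kills all partitions containing a singleton block, leaving precisely $C'\delta_{ij}\delta_{kl}+C''(\delta_{ij}\delta_{kl}+\delta_{ik}\delta_{jl}+\delta_{il}\delta_{jk})$. The identification of the covariant Hessian with the coordinate Hessian at the center of normal coordinates is the right way to make $X_{ij}=E_iE_jX$ precise, so no gap remains.
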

It can be seen from Lemma \ref{Lem:joint distribution sphere} that $C' = {\rm Var}[X_i(t)] > 0$ and $C'' = {\rm Var}[X_{ij}(t)] > 0$ for $i\ne j$, implying that $\eta > 0$ and $\kappa > 0$.

\subsection{The Non-Boundary Case: $0 < \kappa^2 - \eta^2 < (N+2)/N$}
Similarly to Proposition \ref{Prop:nondegenerate-field}, we have the following result.
\begin{proposition}\label{Prop:nondegenerate-field-sphere}
\ Let the assumptions in Lemma \ref{Lem:joint distribution sphere} hold. Then $\kappa^2-\eta^2\le (N+2)/N$. In particular,
the Gaussian vector $(X(t), \nabla X(t), X_{ij}(t), 1\leq i\leq j\leq N)$ is nondegenerate if and only if $\kappa^2-\eta^2<(N+2)/N$.
\end{proposition}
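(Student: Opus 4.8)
The plan is to mirror the proof of Proposition \ref{Prop:nondegenerate-field} almost verbatim, exploiting the fact that the covariance structure in Lemma \ref{Lem:joint distribution sphere} differs from the Euclidean one only through the extra term $(C''+C')\delta_{ij}\delta_{kl}$ in the Hessian covariance. First I would observe, using Lemma \ref{Lem:joint distribution sphere}, that $\nabla X(t)$ is independent of the off-diagonal entries $(X_{ij}(t),\ 1\le i<j\le N)$, and that both are independent of the block $(X(t), X_{11}(t), \ldots, X_{NN}(t))$. Hence the degeneracy question for the full vector $(X(t), \nabla X(t), X_{ij}(t),\ 1\le i\le j\le N)$ reduces to the degeneracy of the $(N+1)$-dimensional Gaussian vector $(X(t), X_{11}(t), \ldots, X_{NN}(t))$, exactly as in the Euclidean case.

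Next I would write down the covariance of this reduced vector. By Lemma \ref{Lem:joint distribution sphere}, the diagonal-Hessian block is $\mathbf{D} = {\rm Cov}(X_{11}(t), \ldots, X_{NN}(t)) = 2C'' I_N + (C''+C')\mathbf{1}_N\mathbf{1}_N^T$, since ${\rm Var}(X_{ii}) = 2C'' + (C''+C') = 3C''+C'$ and ${\rm Cov}(X_{ii},X_{jj}) = C''+C'$ for $i\ne j$; and the cross term is $\mathbf{B} = (\E[X X_{11}], \ldots, \E[X X_{NN}]) = (-C', \ldots, -C')$. I would then use the Schur-complement identity
\begin{equation*}
{\rm det Cov}(X(t), X_{11}(t), \ldots, X_{NN}(t)) = (1 - \mathbf{B}\mathbf{D}^{-1}\mathbf{B}^T)\,{\rm det}(\mathbf{D}),
\end{equation*}
so that nondegeneracy is equivalent to $1 - \mathbf{B}\mathbf{D}^{-1}\mathbf{B}^T > 0$ (noting ${\rm det}(\mathbf{D})>0$ since $C''>0$).

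The computational heart is therefore evaluating $\mathbf{B}\mathbf{D}^{-1}\mathbf{B}^T$. Since $\mathbf{D}$ has the rank-one-plus-scalar form $aI_N + b\mathbf{1}_N\mathbf{1}_N^T$ with $a=2C''$ and $b=C''+C'$, I would invert it via the Sherman--Morrison formula, giving $\mathbf{D}^{-1} = a^{-1}(I_N - \frac{b}{a+Nb}\mathbf{1}_N\mathbf{1}_N^T)$. Because $\mathbf{B} = -C'\mathbf{1}_N^T$ is proportional to $\mathbf{1}_N^T$, the quadratic form collapses to $\mathbf{B}\mathbf{D}^{-1}\mathbf{B}^T = (C')^2 \mathbf{1}_N^T\mathbf{D}^{-1}\mathbf{1}_N = (C')^2 \cdot \frac{N}{a+Nb}$. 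Substituting $a+Nb = 2C'' + N(C''+C') = (N+2)C'' + NC'$ and simplifying the condition $1 - \frac{N(C')^2}{(N+2)C''+NC'} > 0$, I would clear denominators and divide through by $C''$, recognizing $\kappa^2 = (C')^2/C''$ and $\eta^2 = C'/C''$, to reach the clean equivalence $1 - \mathbf{B}\mathbf{D}^{-1}\mathbf{B}^T > 0 \iff \kappa^2 - \eta^2 < (N+2)/N$. The inequality $\kappa^2-\eta^2 \le (N+2)/N$ follows since the determinant is always nonnegative. I expect the only real obstacle to be bookkeeping care in this final algebraic simplification—specifically confirming that the mixed $\eta^2$ term arises correctly from the $NC'$ piece of $a+Nb$ rather than committing a sign or factor slip, but once the Sherman--Morrison reduction is in place the arithmetic is routine.
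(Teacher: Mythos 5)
Your proposal is correct and follows essentially the same route as the paper: the paper's proof of this proposition simply displays the covariance matrix of $(X(t), X_{11}(t),\ldots,X_{NN}(t))$ and invokes the argument of Proposition \ref{Prop:nondegenerate-field} (reduction by independence, Schur complement, explicit inversion of the $aI_N+b\mathbf{1}_N\mathbf{1}_N^T$ block), which is exactly what you carry out. Your Sherman--Morrison computation and the final identification $\mathbf{B}\mathbf{D}^{-1}\mathbf{B}^T = N(C')^2/\bigl((N+2)C''+NC'\bigr)$ check out, and the semidefiniteness argument for the non-strict inequality matches the paper's.
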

\begin{proof}\
The desired results follow from similar arguments in the proof of Proposition \ref{Prop:nondegenerate-field} and the observation, due to Lemma \ref{Lem:joint distribution sphere}, that
\begin{equation*}
\begin{split}
{\rm Cov}(X(t), X_{11}(t), \ldots, X_{NN}(t))  = \left(
                                                       \begin{array}{ccccc}
                                                         1 & -C' & -C' & \cdots & -C' \\
                                                         -C' & 3C''+C' & C''+C' & \cdots & C''+C' \\
                                                         -C' & C''+C' & 3C''+C' & \cdots & C''+C' \\
                                                         \vdots & \vdots & \vdots & \vdots & \vdots \\
                                                         -C' & C''+C' & C''+C' & \cdots & 3C''+C' \\
                                                       \end{array}
                                                     \right).
\end{split}
\end{equation*}
\end{proof}

In view of Proposition \ref{Prop:nondegenerate-field-sphere}, here we will make use of the following non-boundary condition.
\begin{itemize}
\item[({\bf B})] $\kappa^2-\eta^2 < (N+2)/N$.
\end{itemize}

\begin{lemma}\label{Lem:GOE for det Hessian sphere} Let the assumptions in Lemma \ref{Lem:joint distribution sphere} hold. Let  $\widetilde{M}$ and $M$ be $\GOI((1+\eta^2)/2)$ and $\GOI((1+\eta^2-\kappa^2)/2)$ matrices respectively.

(i) The distribution of $\nabla^2X(t)$ is the same as that of $\sqrt{2C''}\widetilde{M}$.

(ii) The distribution of $\nabla^2X(t)|X(t)=x$ is the same as that of $\sqrt{2C''}\big[M - \big(\kappa x/\sqrt{2}\big)I_N\big]$.
\end{lemma}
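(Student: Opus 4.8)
The plan is to follow exactly the template of Lemma \ref{Lem:GOE for det Hessian}, which handled the Euclidean case, and adapt it to the covariance structure recorded in Lemma \ref{Lem:joint distribution sphere}. The whole argument rests on reading off the right GOI covariance parameter from the entrywise second moments of the Hessian, so first I would recall from Lemma \ref{Lem:joint distribution sphere} that $\E\{X_{ij}(t)X_{kl}(t)\} = C''(\delta_{ik}\delta_{jl} + \delta_{il}\delta_{jk}) + (C''+C')\delta_{ij}\delta_{kl}$. Comparing this with the defining relation \eqref{eq:OI} of a $\GOI(c)$ matrix, namely $\E[M_{ij}M_{kl}] = \tfrac12(\delta_{ik}\delta_{jl}+\delta_{il}\delta_{jk}) + c\,\delta_{ij}\delta_{kl}$, I would factor out the scalar $2C''$ so that the leading symmetric term matches the GOE normalization $\tfrac12(\delta_{ik}\delta_{jl}+\delta_{il}\delta_{jk})$. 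This forces $\nabla^2 X(t)$ to equal $\sqrt{2C''}\,\widetilde M$ in distribution, where $\widetilde M$ has diagonal covariance parameter $c = (C''+C')/(2C'') = (1+C'/C'')/2 = (1+\eta^2)/2$, using $\eta^2 = C'/C''$ from \eqref{Def:C' and C''}. That settles part (i).

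For part (ii) I would invoke the standard Gaussian conditioning formula together with the cross-covariances from Lemma \ref{Lem:joint distribution sphere}. Since $\E\{X_i(t)X(t)\}=0$ and $\E\{X_i(t)X_{jk}(t)\}=0$, the gradient plays no role and the conditional law of the Hessian given $X(t)=x$ decomposes as a deterministic shift plus a centered Gaussian residual. The mean shift comes from $\E\{X_{ij}(t)X(t)\} = -C'\delta_{ij}$ combined with $\mathrm{Var}(X(t))=1$, giving a conditional mean of $-C' x\, I_N$; expressed in terms of the scaling $\sqrt{2C''}$ and $\kappa = C'/\sqrt{C''}$ this is exactly $-\sqrt{2C''}\,(\kappa x/\sqrt2)\,I_N$, matching the stated $-(\kappa x/\sqrt2)I_N$ term inside the bracket. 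The residual matrix $\Delta = \nabla^2 X(t) - \E[\nabla^2 X(t)\mid X(t)=x]$ has covariance $\E\{\Delta_{ij}\Delta_{kl}\}$ equal to the unconditional covariance minus the rank-one correction $\E\{X_{ij}X\}\E\{X_{kl}X\} = C'^2\,\delta_{ij}\delta_{kl}$, so its diagonal-coupling coefficient becomes $(C''+C') - C'^2$ while the symmetric part $C''(\delta_{ik}\delta_{jl}+\delta_{il}\delta_{jk})$ is untouched. Dividing by $2C''$ to normalize the symmetric part then yields the GOI parameter $c = \big((C''+C'-C'^2)/(2C'')\big) = (1 + \eta^2 - \kappa^2)/2$, which is precisely the $\GOI((1+\eta^2-\kappa^2)/2)$ claimed for $M$.

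The only genuine bookkeeping, and the step most likely to hide a sign or scaling slip, is keeping the three constants $2C''$, $\eta^2=C'/C''$, and $\kappa^2=C'^2/C''$ consistent when translating the raw moments into GOI parameters; in particular one must be careful that the extra $+C'$ in the diagonal term of Lemma \ref{Lem:joint distribution sphere} (absent in the Euclidean Lemma \ref{Lem:cov of isotropic Euclidean}) is what produces the $+\eta^2$ shift relative to the Euclidean values $\GOI(1/2)$ and $\GOI((1-\kappa^2)/2)$. Finally I would note, as in the remark following Lemma \ref{Lem:GOE for det Hessian}, that condition $({\bf B})$ guarantees $(1+\eta^2-\kappa^2)/2 > -1/N$, so the conditional matrix $M$ is a genuine nondegenerate $\GOI$ and Lemma \ref{lemma:GOI} applies; this ties the nondegeneracy threshold of Proposition \ref{Prop:nondegenerate-field-sphere} to the GOI bound $c>-1/N$ of Lemma \ref{lemma:GOI-c}, completing the proof.
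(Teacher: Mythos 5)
Your proposal is correct and follows essentially the same route as the paper: part (i) by matching the entrywise second moments from Lemma \ref{Lem:joint distribution sphere} to the GOI definition \eqref{eq:OI} after factoring out $\sqrt{2C''}$, and part (ii) by the Gaussian conditioning formula, writing the conditional Hessian as a residual $\Delta$ plus the shift $-C'xI_N$ and reading off $c=(C''+C'-C'^2)/(2C'')=(1+\eta^2-\kappa^2)/2$. Your closing remark on nondegeneracy under condition $({\bf B})$ matches the paper's observation following the lemma.
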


\begin{proof} \ Part (i) is a direct consequence of Lemma \ref{Lem:joint distribution sphere}. For part (ii), applying Lemma \ref{Lem:cov of isotropic Euclidean} and the well-known conditional formula for Gaussian variables, we see that $(\nabla^2X(t)|X(t)=x)$ can be written as $\Delta - C'xI_N$, where $\Delta=(\Delta_{ij})_{1\leq i,j\leq N}$ is a symmetric $N\times N$ matrix with centered Gaussian entries such that
\begin{equation*}
\E\{\Delta_{ij}\Delta_{kl}\}=C''(\delta_{ik}\delta_{jl} + \delta_{il}\delta_{jk}) + (C''+C'-C'^2)\delta_{ij}\delta_{kl}.
\end{equation*}
Therefore, $\Delta$ has the same distribution as the random matrix $\sqrt{2C''}M$, completing the proof.
\end{proof}

Note that condition $({\bf B})$ implies $(1+\eta^2-\kappa^2)/2>-1/N$, making the Gaussian random matrix $\GOI((1+\eta^2-\kappa^2)/2)$ in Lemma \ref{Lem:GOE for det Hessian sphere} nondegenerate.

\subsection{Expected Number and Height Distribution of Critical Points}
\begin{theorem}\label{Thm:critical-points-nondegenerate-sphere} Let $\{X(t), t\in \S^N\}$ be a centered, unit-variance, smooth isotropic Gaussian random field satisfying condition $({\bf B})$. Then for $i=0, \ldots, N$,
\begin{equation}\label{Eq:critical-sphere}
\begin{split}
\E[\mu_i(X)] &= \frac{1}{\pi^{N/2}\eta^N} \E_{\GOI((1+\eta^2)/2)}^N \left[\prod_{j=1}^N|\la_j|\mathbbm{1}_{\{\la_i<0<\la_{i+1}\}}\right]
\end{split}
\end{equation}
and
\begin{equation*}
\begin{split}
\E[\mu_i(X, u)]  =\frac{1}{\pi^{N/2}\eta^N} \int_u^\infty \phi(x) \E_{\GOI((1+\eta^2-\kappa^2)/2)}^N \left[\prod_{j=1}^N\big|\la_j-\kappa x/\sqrt{2}\big|\mathbbm{1}_{\{\la_i<\kappa x/\sqrt{2}<\la_{i+1}\}}\right] dx,
\end{split}
\end{equation*}
where $\phi(x)$ is the density of standard Gaussian variable, $\E_{\GOI(c)}^N$ is defined in \eqref{Eq:E-GOI}, and $\la_0$ and $\la_{N+1}$ are regarded respectively as $-\infty$ and $\infty$ for consistency.
\end{theorem}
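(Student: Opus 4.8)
The plan is to follow exactly the structure of the proof of Theorem \ref{Thm:critical-points-nondegenerate-euclidean}, since the sphere case is completely parallel to the Euclidean case; the only differences are the numerical constants arising from Lemma \ref{Lem:joint distribution sphere} versus Lemma \ref{Lem:cov of isotropic Euclidean}, and the GOI covariance parameters supplied by Lemma \ref{Lem:GOE for det Hessian sphere} in place of Lemma \ref{Lem:GOE for det Hessian}. First I would invoke the Kac-Rice metatheorem (Theorem 11.2.1 in \cite{AT07}), which is applicable because condition $({\bf B})$ guarantees nondegeneracy of the joint distribution of the field and its Hessian. Since $X$ is isotropic and $\nabla X(t)$ is independent of $(X(t), \nabla^2 X(t))$ with ${\rm Var}(X_i(t)) = C'$ by Lemma \ref{Lem:joint distribution sphere}, the Kac-Rice formula gives the density factor $(2\pi)^{-N/2}(C')^{-N/2}$ times the conditional expectation of $|\det(\nabla^2 X(t))|$ restricted to the index-$i$ event, integrated against the field.

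Next I would compute the two expectations. For $\E[\mu_i(X)]$ there is no conditioning on the field value, so I would apply Lemma \ref{Lem:GOE for det Hessian sphere}(i): the Hessian $\nabla^2 X(t)$ has the same distribution as $\sqrt{2C''}\,\widetilde M$ with $\widetilde M$ a $\GOI((1+\eta^2)/2)$ matrix. The determinant then contributes a factor $(2C'')^{N/2}$ from pulling the scalar out of each of the $N$ eigenvalues, and the index-$i$ event translates into $\mathbbm{1}_{\{\la_i < 0 < \la_{i+1}\}}$ in terms of the ordered GOI eigenvalues. Combining the prefactors $(2\pi)^{-N/2}(C')^{-N/2}(2C'')^{N/2}$ and using $\eta = \sqrt{C'}/\sqrt{C''}$ from \eqref{Def:C' and C''}, one checks that the constant collapses to $1/(\pi^{N/2}\eta^N)$, yielding \eqref{Eq:critical-sphere}. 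For $\E[\mu_i(X,u)]$ I would instead condition on $X(t)=x$, integrate $\phi(x)$ over $x \in (u,\infty)$, and apply Lemma \ref{Lem:GOE for det Hessian sphere}(ii): the conditional Hessian has the distribution of $\sqrt{2C''}\,[M - (\kappa x/\sqrt 2)I_N]$ with $M$ a $\GOI((1+\eta^2-\kappa^2)/2)$ matrix. The shift by $(\kappa x/\sqrt 2)I_N$ moves the eigenvalues, so the determinant becomes $\prod_{j=1}^N |\la_j - \kappa x/\sqrt 2|$ and the index-$i$ event becomes $\mathbbm{1}_{\{\la_i < \kappa x/\sqrt 2 < \la_{i+1}\}}$, again after extracting the scalar $(2C'')^{N/2}$.

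The bookkeeping obstacle, and the one place where the sphere constants genuinely differ from the Euclidean ones, is the verification that the numerical prefactors simplify correctly. In the Euclidean case the variance of each first derivative is $-2\rho'$ and the Hessian scale is $\sqrt{8\rho''}$, producing the prefactor $2^{N/2}/(\pi^{N/2}\eta^N)$; here the corresponding quantities are $C'$ and $\sqrt{2C''}$, and I expect the factor of $2^{N/2}$ to disappear, giving the cleaner $1/(\pi^{N/2}\eta^N)$ stated in the theorem. I would confirm this by tracking $(-2\rho')$ replaced by $C'$ and $(8\rho'')$ replaced by $(2C'')$ throughout, so that $(2C''/(C'))^{N/2} = (2/\eta^2)^{N/2} = 2^{N/2}/\eta^N$ combines with $(2\pi)^{-N/2}$ to give $1/(\pi^{N/2}\eta^N)$. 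Everything else—the appearance of the correct GOI parameters and the translation of the index and sign constraints into indicator functions on ordered eigenvalues—is dictated verbatim by Lemma \ref{Lem:GOE for det Hessian sphere}, so no new idea is required beyond substituting the sphere-specific covariances into the Euclidean argument.
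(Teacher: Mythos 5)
Your proposal is correct and follows essentially the same route as the paper's own proof: the Kac--Rice metatheorem supplies the prefactor $(2\pi)^{-N/2}(C')^{-N/2}$, Lemma \ref{Lem:GOE for det Hessian sphere} converts the (conditional) Hessian into a scaled GOI matrix, and the constant $(2C''/(2\pi C'))^{N/2}=(C''/(\pi C'))^{N/2}=1/(\pi^{N/2}\eta^N)$ collapses exactly as you describe. No gaps.
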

\begin{proof} \ By the Kac-Rice metatheorem and Lemmas \ref{Lem:joint distribution sphere} and \ref{Lem:GOE for det Hessian sphere},
\begin{equation*}
\begin{split}
\E[\mu_i(X)] &= \frac{1}{(2\pi)^{N/2} (C')^{N/2}} \E[ |\text{det} (\nabla^2 X(t))|\mathbbm{1}_{\{\text{index} (\nabla^2 X(t)) = i\}}]\\
&= \left(\frac{C''}{\pi C'}\right)^{N/2} \E_{\GOI((1+\eta^2)/2)}^N \left[\left(\prod_{j=1}^N|\la_j| \right) \mathbbm{1}_{\{\la_i<0<\la_{i+1}\}}\right]
\end{split}
\end{equation*}
and
\begin{equation*}
\begin{split}
\E[\mu_i(X, u)]  &=\frac{1}{(2\pi)^{N/2} (C')^{N/2}} \int_u^\infty \phi(x)\E[ |\text{det} (\nabla^2 X(t))|\mathbbm{1}_{\{\text{index} (\nabla^2 X(t)) = i\}} | X(t)=x]dx\\
&=\left(\frac{C''}{\pi C'}\right)^{N/2} \int_u^\infty \phi(x) dx\\
 &\quad \times \E_{\GOI((1+\eta^2-\kappa^2)/2)}^N \left[\left(\prod_{j=1}^N\big|\la_j-\kappa x/\sqrt{2}\big| \right) \mathbbm{1}_{\{\la_i<\kappa x/\sqrt{2}<\la_{i+1}\}}\right].
\end{split}
\end{equation*}
Then the desired results follow from the definition of $\eta$.
\end{proof}

The following result is an immediate consequence of \eqref{Eq:F-ratio} and Theorem \ref{Thm:critical-points-nondegenerate-sphere}.
\begin{corollary}\label{Cor:HeightDist-sphere-Nondegenerate}
 Let the assumptions in Theorem \ref{Thm:critical-points-nondegenerate-sphere} hold. Then for $i=0, \ldots, N$,
\begin{equation*}
\begin{split}
F_i(u)  &=\frac{\int_u^\infty \phi(x) \E_{\GOI((1+\eta^2-\kappa^2)/2)}^N \left[\prod_{j=1}^N\big|\la_j-\kappa x/\sqrt{2}\big|\mathbbm{1}_{\{\la_i<\kappa x/\sqrt{2}<\la_{i+1}\}}\right] dx}{\E_{\GOI((1+\eta^2)/2)}^N \left[\prod_{j=1}^N|\la_j|\mathbbm{1}_{\{\la_i<0<\la_{i+1}\}}\right] }.
\end{split}
\end{equation*}
\end{corollary}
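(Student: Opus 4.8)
The plan is to obtain the formula directly from the ratio identity \eqref{Eq:F-ratio}, namely $F_i(u) = \E[\mu_i(X,u)]/\E[\mu_i(X)]$, by substituting the two explicit expressions furnished by Theorem \ref{Thm:critical-points-nondegenerate-sphere}. First I would record the numerator and denominator from that theorem verbatim: the denominator is $\pi^{-N/2}\eta^{-N}\,\E_{\GOI((1+\eta^2)/2)}^N[\prod_{j=1}^N|\la_j|\mathbbm{1}_{\{\la_i<0<\la_{i+1}\}}]$, and the numerator is $\pi^{-N/2}\eta^{-N}\int_u^\infty \phi(x)\,\E_{\GOI((1+\eta^2-\kappa^2)/2)}^N[\prod_{j=1}^N|\la_j-\kappa x/\sqrt{2}|\,\mathbbm{1}_{\{\la_i<\kappa x/\sqrt{2}<\la_{i+1}\}}]\,dx$. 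The crucial (and essentially only) observation is that both carry the identical prefactor $\pi^{-N/2}\eta^{-N}$, which cancels in the quotient, leaving exactly the claimed expression for $F_i(u)$.

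Two minor points warrant verification. The ratio representation \eqref{Eq:F-ratio} is itself inherited from the arguments of \cite{CS15} as noted in the introduction, so I would simply invoke it rather than re-derive it. I would also confirm that both $\GOI$ expectations are well defined under condition $({\bf B})$: that condition gives $(1+\eta^2-\kappa^2)/2 > -1/N$, so the conditioned Hessian is a nondegenerate $\GOI$ matrix by Lemma \ref{lemma:GOI-c}, while $(1+\eta^2)/2 > 0 \ge -1/N$ holds automatically.

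Since the derivation is pure substitution and cancellation, there is no genuine obstacle; the statement is immediate once the two formulae of Theorem \ref{Thm:critical-points-nondegenerate-sphere} are in hand. If anything, the substantive work was already carried out in proving that theorem via the Kac-Rice metatheorem and Lemma \ref{Lem:GOE for det Hessian sphere}; here nothing more than forming the quotient of the two expressions and cancelling the common factor remains.
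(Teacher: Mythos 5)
Your proof is correct and matches the paper's approach exactly: the paper likewise presents this corollary as an immediate consequence of \eqref{Eq:F-ratio} and Theorem \ref{Thm:critical-points-nondegenerate-sphere}, with the common prefactor $\pi^{-N/2}\eta^{-N}$ cancelling in the quotient. Your added checks on the nondegeneracy of the two $\GOI$ expectations under condition $({\bf B})$ are sensible but not needed beyond what the theorem already guarantees.
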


\begin{example} \label{Example:critical-points-nondegenerate-sphere}
Let the assumptions in Theorem \ref{Thm:critical-points-nondegenerate-sphere} hold and let $N=2$. Then
\begin{equation*}
\E[\mu_2(X)] = \E[\mu_0(X)] = \frac{1}{4\pi} + \frac{1}{2\pi \eta^2\sqrt{3+\eta^2}}
\end{equation*}
and
\begin{equation*}
\E[\mu_1(X)] = \frac{1}{\pi \eta^2\sqrt{3+\eta^2}}.
\end{equation*}
Moreover,
\begin{equation*}
\begin{split}
h_2(x)=h_0(-x)&=\frac{2\sqrt{3+\eta^2}}{2+\eta^2\sqrt{3+\eta^2}}\Bigg\{ \left[\eta^2+\kappa^2(x^2-1)\right]\phi(x)\Phi\left(\frac{\kappa x}{\sqrt{2+\eta^2-\kappa^2}} \right) \\
&\quad+ \frac{\kappa\sqrt{(2+\eta^2-\kappa^2)}}{2\pi}xe^{-\frac{(2+\eta^2)x^2}{2(2+\eta^2-\kappa^2)}} \\
&\quad + \frac{\sqrt{2}}{\sqrt{\pi(3+\eta^2-\kappa^2)}}e^{-\frac{(3+\eta^2)x^2}{2(3+\eta^2-\kappa^2)}} \Phi\left(\frac{\kappa x}{\sqrt{(2+\eta^2-\kappa^2)(3+\eta^2-\kappa^2)}} \right) \Bigg\}
\end{split}
\end{equation*}
and
\begin{equation*}
\begin{split}
h_1(x) =\frac{\sqrt{3+\eta^2}}{\sqrt{2\pi(3+\eta^2-\kappa^2)}}e^{-\frac{(3+\eta^2)x^2}{2(3+\eta^2-\kappa^2)}},
\end{split}
\end{equation*}
which is a normal distribution (see Figure \ref{fig:HeightDensity}). For each $i=0,1,2$, one has
\[
\E[\mu_i(X, u)] = \E[\mu_i(X)]F_i(u)=\E[\mu_i(X)]\int_u^\infty h_i(x)dx.
\]

\end{example}

\subsection{Fyodorov's Restricted Case: $\kappa^2-\eta^2\le 1$}\label{sec:restriction-sphere}

It can be seen from Proposition \ref{Prop:nondegenerate-field-sphere} that $X=\{X(t), t\in \S^N\}$ is an isotropic Gaussian random field on $\S^N$ for every $N\ge 1$ if and only if
\[
\kappa^2-\eta^2\le \inf_{N\ge 1} \frac{N+2}{N} = 1.
\]
In that case, a $\GOI(c)$ with $c = (1+\eta^2-\kappa^2)/2 \ge 0$ can be written in the form \eqref{Eq:c-positive}.
Therefore, one can apply Lemma \ref{Lem:GOE for det Hessian} and the integral techniques in Fyodorov \cite{Fyodorov04} to compute $\E[\mu_i(X, u)]$.

Alternatively, under the condition $\kappa^2-\eta^2< 1$, we apply our general formulae in Theorem \ref{Thm:critical-points-nondegenerate-sphere} and supporting Lemma \ref{Lem:c-positive} in the Appendix to derive immediately the following result, which is essentially the same as those in \cite{AuffingerPhD,Fyodorov14} (applications of Fyodorov \cite{Fyodorov04} for Gaussian fields on $\S^N$). This shows the generality of the results in Theorem \ref{Thm:critical-points-nondegenerate-euclidean}. Notice that although the case $\kappa^2-\eta^2 = 1$ was not considered in \cite{AuffingerPhD,Fyodorov14}, it corresponds to the pure GOE case [see \eqref{Eq:c-positive}], which is easier to handle and therefore the comparison for this case is omitted here.

\begin{corollary} Let the assumptions in Theorem \ref{Thm:critical-points-nondegenerate-sphere} hold. Then for $i=0, \ldots, N$, 
\begin{equation*}
\begin{split}
\E[\mu_i(X)] = \frac{\sqrt{2}\Gamma\left(\frac{N+1}{2}\right)}{\pi^{(N+1)/2}\eta^N\sqrt{1+\eta^2}} \E_{\rm GOE}^{N+1}\left\{ \exp\left[\frac{\la_{i+1}^2}{2} -\frac{\la_{i+1}^2}{1+\eta^2} \right] \right\},
\end{split}
\end{equation*}
and under $\kappa^2-\eta^2<1$,
\begin{equation*}
\begin{split}
\E[\mu_i(X, u)]  &=\frac{\sqrt{2}\Gamma\left(\frac{N+1}{2}\right)}{\pi^{(N+1)/2}\eta^N\sqrt{1+\eta^2-\kappa^2}}\\
&\quad \times  \int_u^\infty \phi(x)\E_{\rm GOE}^{N+1}\left\{ \exp\left[\frac{\la_{i+1}^2}{2} - \frac{\big(\la_{i+1}-\kappa x/\sqrt{2} \big)^2}{1+\eta^2-\kappa^2} \right]\right\}dx,
\end{split}
\end{equation*}
where $\E_{\rm GOE}^{N+1}$ is defined in \eqref{Eq:E-GOE}.
\end{corollary}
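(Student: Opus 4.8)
The plan is to derive the corollary directly from the general formulae in Theorem~\ref{Thm:critical-points-nondegenerate-sphere} by converting the $\GOI(c)$ expectations into $\GOE$ expectations of dimension $N+1$. Under condition $({\bf B})$ with the additional assumption $\kappa^2-\eta^2<1$, the relevant covariance parameters are $c=(1+\eta^2)/2\ge 0$ (for $\E[\mu_i(X)]$) and $c=(1+\eta^2-\kappa^2)/2>0$ (for $\E[\mu_i(X,u)]$), so both lie in the regime $c>0$ where the characterization \eqref{Eq:c-positive} applies, namely a $\GOI(c)$ matrix decomposes as $H+\sqrt{c}\,\xi I_N$ with $H$ a GOE and $\xi$ an independent standard Gaussian. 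The supporting Lemma~\ref{Lem:c-positive} in the Appendix is precisely the tool that translates an $N$-dimensional $\GOI(c)$ expectation of a product $\prod_j|\la_j-a|$ against the indicator $\mathbbm{1}_{\{\la_i<a<\la_{i+1}\}}$ into an $(N+1)$-dimensional GOE expectation; I would invoke it directly.

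The key steps, in order, are as follows. First I would write down the two expressions from Theorem~\ref{Thm:critical-points-nondegenerate-sphere}, recalling that the prefactor is $1/(\pi^{N/2}\eta^N)$ and that the shift in the second formula is $a=\kappa x/\sqrt{2}$. Second, I would apply Lemma~\ref{Lem:c-positive} from the Appendix to each $\GOI$ expectation. The idea underlying that lemma is that the eigenvalue $a$ sits in the gap $(\la_i,\la_{i+1})$ precisely when one inserts $a$ as the $(i{+}1)$-st ordered eigenvalue into an $(N+1)$-tuple; the product $\prod_{j=1}^N|\la_j-a|$ together with the Vandermonde factor $\prod_{j<k}|\la_j-\la_k|$ reassembles the full $(N+1)$-dimensional Vandermonde $\prod|\la_j-\la_k|$ over the augmented set containing $a$, and the Gaussian weight from the extra variable $\xi$ in \eqref{Eq:c-positive} produces the normalizing constant $\Gamma((N+1)/2)$ and the exponential correction terms. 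For $\E[\mu_i(X)]$ the shift is $a=0$, so the conditional $\GOI(1/2)$ in the Euclidean case is replaced here by $\GOI((1+\eta^2)/2)$, giving $c=(1+\eta^2)/2$ and hence the factor $\sqrt{2}/\sqrt{1+\eta^2}$ and the exponent $\la_{i+1}^2/2-\la_{i+1}^2/(1+\eta^2)$. Third, for $\E[\mu_i(X,u)]$ I would carry out the same reduction with $c=(1+\eta^2-\kappa^2)/2$ and shift $a=\kappa x/\sqrt{2}$, which yields the factor $\sqrt{2}/\sqrt{1+\eta^2-\kappa^2}$ together with the exponent $\la_{i+1}^2/2-(\la_{i+1}-\kappa x/\sqrt{2})^2/(1+\eta^2-\kappa^2)$ inside the $(N+1)$-dimensional GOE expectation. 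Finally I would collect the constants $\pi^{N/2}\to\pi^{(N+1)/2}$, absorbing the extra $\sqrt{\pi}$ from the Gaussian integral over $\xi$, to match the stated prefactors.

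The main obstacle I anticipate is the careful bookkeeping in the application of Lemma~\ref{Lem:c-positive}: one must verify that the shifted argument $a$ really lands as the $(i{+}1)$-st eigenvalue in the ordered $(N+1)$-tuple and that the indicator $\mathbbm{1}_{\{\la_i<a<\la_{i+1}\}}$ becomes the marginal selecting $\la_{i+1}$ as the inserted value, so that the GOE expectation is taken against a single designated coordinate $\la_{i+1}$ rather than an integral over the gap. The appearance of $\exp[\la_{i+1}^2/2]$ with a \emph{positive} sign is the delicate point: it is the reciprocal of the Gaussian weight that the $(N+1)$-dimensional GOE density \eqref{Eq:GOE density} attaches to the coordinate $\la_{i+1}$, which must be cancelled because $a$ is a deterministic value and not a genuine Gaussian eigenvalue. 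Tracking this cancellation, together with the completion of the square in $\xi$ that produces the $1/(1+\eta^2-\kappa^2)$ denominator, is where the computation requires the most attention. Since Lemma~\ref{Lem:c-positive} is assumed to package exactly this manipulation, the proof reduces to substitution once the parameter identifications $c=(1+\eta^2)/2$ and $c=(1+\eta^2-\kappa^2)/2$ and the shift $a=\kappa x/\sqrt{2}$ are made explicit.
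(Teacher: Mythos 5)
Your proposal is correct and follows exactly the route the paper takes: substitute the two $\GOI$ expectations from Theorem \ref{Thm:critical-points-nondegenerate-sphere} into Lemma \ref{Lem:c-positive} with $c=(1+\eta^2)/2$, $a=0$ and $c=(1+\eta^2-\kappa^2)/2$, $a=\kappa x/\sqrt{2}$ respectively, noting that the hypothesis $\kappa^2-\eta^2<1$ is precisely what guarantees $c>0$ in the second case. The constant bookkeeping ($\sqrt{\pi c}$ in the denominator yielding the $\sqrt{2}/\sqrt{\pi}\sqrt{1+\eta^2-\kappa^2}$ factors and the promotion $\pi^{N/2}\to\pi^{(N+1)/2}$) checks out as you describe.
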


\subsection{The Boundary Case of Random Laplacian Eigenfunctions on the Sphere: $\kappa^2 - \eta^2 = (N+2)/N$}
Similarly to Proposition \ref{Prop:degenerate-field}, we have the following result. Here $X(t)$ is an eigenfunction of the Laplace operator with eigenvalue $-N C'<0$.
\begin{proposition}\label{Prop:degenerate-field-sphere}
\ Let the assumptions in Lemma \ref{Lem:joint distribution sphere} hold. Then $(X(t), \nabla X(t), X_{ij}(t), 1\leq i\leq j\leq N)$ is degenerate if and only if $\kappa^2-\eta^2=(N+2)/N$, which is equivalent to
\begin{equation}\label{Eq: degenerate-sphere}
\sum_{i=1}^N X_{ii}(t) = -NC' X(t).
\end{equation}
\end{proposition}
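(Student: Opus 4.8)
The plan is to mirror the proof of Proposition \ref{Prop:degenerate-field}, adapting it to the sphere covariance structure recorded in Lemma \ref{Lem:joint distribution sphere}. The first half of the statement, namely that degeneracy is equivalent to $\kappa^2-\eta^2=(N+2)/N$, is already essentially contained in Proposition \ref{Prop:nondegenerate-field-sphere}: that result gives $\kappa^2-\eta^2\le (N+2)/N$ in all cases, with nondegeneracy holding if and only if the inequality is strict, so degeneracy is precisely the boundary equality. It therefore remains to show that this boundary equality is equivalent to the Helmholtz relation \eqref{Eq: degenerate-sphere}. As in the Euclidean argument, I would first reduce to the subvector $(X(t),X_{11}(t),\ldots,X_{NN}(t))$: by Lemma \ref{Lem:joint distribution sphere} the gradient $\nabla X(t)$ and the off-diagonal entries $(X_{ij}(t),\,i<j)$ are uncorrelated with each other and with $(X(t),X_{11}(t),\ldots,X_{NN}(t))$, and they form a nondegenerate subvector, so the degeneracy of the full vector is equivalent to that of $(X(t),X_{11}(t),\ldots,X_{NN}(t))$.

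For the forward direction I would take $\Sigma={\rm Cov}(X(t),X_{11}(t),\ldots,X_{NN}(t))$ in the form displayed in the proof of Proposition \ref{Prop:nondegenerate-field-sphere} and verify, under the boundary condition $C''=N(C'^2-C')/(N+2)$ (which is exactly $\kappa^2-\eta^2=(N+2)/N$ after substituting $\kappa^2=C'^2/C''$ and $\eta^2=C'/C''$), that $v=(NC',1,\ldots,1)^T$ satisfies $\Sigma v=0$. The first row gives $NC'-NC'=0$, while each remaining row gives $-NC'^2+(N+2)C''+NC'$, which vanishes precisely at the boundary. Then, exactly as in Proposition \ref{Prop:degenerate-field}, I would diagonalize $\Sigma=V\Lambda V^T$ with a zero entry of $\Lambda$ along $v$, write $(X(t),X_{11}(t),\ldots,X_{NN}(t))^T=\Sigma^{1/2}Z=V\Lambda^{1/2}W$ with $W\sim\mathcal N(0,I_N)$, and read off from $V^T(X(t),X_{11}(t),\ldots,X_{NN}(t))^T=\Lambda^{1/2}W$ that the coordinate along $v$ is almost surely zero, i.e.\ $v^T(X(t),X_{11}(t),\ldots,X_{NN}(t))^T=0$, which is \eqref{Eq: degenerate-sphere}.

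For the converse I would assume \eqref{Eq: degenerate-sphere} and recover the boundary condition directly by pairing covariances. Taking the covariance of $\sum_{i}X_{ii}(t)=-NC'X(t)$ with $X_{11}(t)$ and invoking Lemma \ref{Lem:joint distribution sphere} (so that $\E[X_{ii}X_{11}]=2C''\delta_{i1}+C''+C'$ and $\E[XX_{11}]=-C'$) yields $(N+2)C''+NC'=NC'^2$, which is again $\kappa^2-\eta^2=(N+2)/N$; alternatively, \eqref{Eq: degenerate-sphere} is itself a nontrivial linear dependence among $(X(t),X_{11}(t),\ldots,X_{NN}(t))$ and hence forces degeneracy, which by the first half equals the boundary equality. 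The interpretation that $X$ is then a Laplace eigenfunction with eigenvalue $-NC'$ is merely a restatement of \eqref{Eq: degenerate-sphere}, since $\sum_i X_{ii}(t)$ is the trace of the Hessian.

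The two covariance computations are routine; the one point requiring care is pinning down the correct null direction $v=(NC',1,\ldots,1)^T$ and eigenvalue $-NC'$, which differ from the Euclidean pair $(-2N\rho',1,\ldots,1)^T$ with eigenvalue $2N\rho'$ because the sphere Hessian covariance in Lemma \ref{Lem:joint distribution sphere} carries the extra $(C''+C')\delta_{ij}\delta_{kl}$ term and the sign convention $\E[XX_{ii}]=-C'$. I expect matching these coefficients and signs to be the only delicate part, but it is a direct calculation once $\Sigma$ is written out explicitly.
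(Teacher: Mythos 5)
Your proposal is correct and follows essentially the same route as the paper, which simply invokes ``similar arguments'' to the Euclidean Proposition \ref{Prop:degenerate-field} together with the explicit form of ${\rm Cov}(X(t),X_{11}(t),\ldots,X_{NN}(t))$; your identification of the null direction $(NC',1,\ldots,1)^T$ and the verification that each Hessian row of $\Sigma v$ equals $-NC'^2+(N+2)C''+NC'$, vanishing exactly at $\kappa^2-\eta^2=(N+2)/N$, are the computations the paper leaves implicit, and your converse via pairing covariances with $X_{11}(t)$ checks out.
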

\begin{proof}\
The desired result follows from similar arguments in the proof of Proposition \ref{Prop:degenerate-field} and the observation that
\begin{equation*}
\begin{split}
&{\rm Cov}(X(t), X_{11}(t), \ldots, X_{NN}(t)) \\
&\quad = \left(
                                                       \begin{array}{ccccc}
                                                         1 & -C' & -C' & \cdots & -C' \\
                                                         -C' & \frac{3NC'^2+(2-2N)C'}{N+2} & \frac{NC'^2+2C'}{N+2} & \cdots & \frac{NC'^2+2C'}{N+2} \\
                                                         -C' & \frac{NC'^2+2C'}{N+2} & \frac{3NC'^2+(2-2N)C'}{N+2} & \cdots & \frac{NC'^2+2C'}{N+2} \\
                                                         \vdots & \vdots & \vdots & \vdots & \vdots \\
                                                         -C' & \frac{NC'^2+2C'}{N+2} & \frac{NC'^2+2C'}{N+2} & \cdots & \frac{3NC'^2+(2-2N)C'}{N+2} \\
                                                       \end{array}
                                                     \right)
\end{split}
\end{equation*}
when $\kappa^2-\eta^2=(N+2)/N$.
\end{proof}

We shall make use of the following condition.
\begin{itemize}
\item[$({\bf B'})$] $\kappa^2-\eta^2 = (N+2)/N$ (or equivalently $\sum_{i=1}^N X_{ii}(t) = -NC' X(t)$).
\end{itemize}

The following is an example of isotropic Gaussian fields on spheres satisfying \eqref{Eq: degenerate-sphere}.
\begin{example}
Consider an isotropic Gaussian field $X=\{X(t), t\in \S^2\}$ with covariance
\[
C(t,s)=\E[X(t)X(s)] = P_\ell(\l t, s\r), \quad t,s\in \S^2,
\]
where $P_\ell$ are Legendre polynomials and $\l \cdot, \cdot \r$ denotes the usual Euclidean inner product in $\R^3$. Then
\[
X_{11}(t) + X_{22}(t) = -\ell(\ell+1) X(t).
\]
That is, $X(t)$ is a random spherical harmonic and a Laplacian eigenfunction with eigenvalue $-\ell(\ell + 1)$ \cite{CM,CMW,Wigman:2009}.
\end{example}

\subsection{Expected Number and Height Distribution of Critical Points}
\begin{theorem}\label{Thm:critical-points-degenerate-sphere} Let $\{X(t), t\in \S^N\}$ be a centered, unit-variance, smooth isotropic Gaussian random field satisfying condition $({\bf B'})$. Then for $i=0, \ldots, N$, $\E[\mu_i(X)]$ is given in \eqref{Eq:critical-sphere} and
\begin{equation*}
\begin{split}
\E[\mu_i(X, u)]  =\frac{1}{\pi^{N/2}\eta^N} \E_{\GOI((1+\eta^2)/2)}^N \left[\prod_{j=1}^N|\la_j|\mathbbm{1}_{\{\la_i<0<\la_{i+1}\}}\mathbbm{1}_{\{\sum_{j=1}^N\la_j/N\le -\sqrt{(N+2+N\eta^2)/(2N)} u\}}\right],
\end{split}
\end{equation*}
where $\E_{\GOI(c)}^N$ is defined in \eqref{Eq:E-GOI} and $\la_0$ and $\la_{N+1}$ are regarded respectively as $-\infty$ and $\infty$ for consistency.
\end{theorem}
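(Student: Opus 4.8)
\medskip
\noindent\textbf{Proof strategy.}\ The plan is to follow the Euclidean boundary argument of Theorem \ref{Thm:critical-points-degenerate-euclidean} verbatim in structure, substituting the spherical Helmholtz relation \eqref{Eq: degenerate-sphere} for \eqref{Eq: degenerate}. The formula for $\E[\mu_i(X)]$ requires nothing new: it is precisely \eqref{Eq:critical-sphere} of Theorem \ref{Thm:critical-points-nondegenerate-sphere}, because that expression uses only Lemma \ref{Lem:GOE for det Hessian sphere}(i), whose GOI parameter $(1+\eta^2)/2 \ge 1/2$ remains strictly positive and hence nondegenerate on the boundary. So I would only treat $\E[\mu_i(X, u)]$.

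First I would apply the Kac-Rice metatheorem as in the proof of Theorem \ref{Thm:critical-points-nondegenerate-sphere}, but retaining the field indicator, to obtain
\begin{equation*}
\E[\mu_i(X, u)] = \frac{1}{(2\pi)^{N/2}(C')^{N/2}} \E\big[|\text{det}(\nabla^2 X(t))|\, \mathbbm{1}_{\{\text{index}(\nabla^2 X(t)) = i\}}\, \mathbbm{1}_{\{X(t) \ge u\}}\big].
\end{equation*}
This step is legitimate because only the full vector $(X(t), \nabla X(t), \nabla^2 X(t))$ degenerates on the boundary; the gradient $\nabla X(t)$ retains a nondegenerate density, so the Kac-Rice formula still applies. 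The pivotal move is then to eliminate $X(t)$: under $({\bf B'})$ the relation \eqref{Eq: degenerate-sphere} gives $\sum_{i=1}^N X_{ii}(t) = -NC' X(t)$ almost surely with $C' > 0$, so the event $\{X(t) \ge u\}$ equals $\{{\rm Tr}(\nabla^2 X(t)) \le -NC' u\}$ almost surely --- a constraint on the Hessian alone.

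Next I would apply Lemma \ref{Lem:GOE for det Hessian sphere}(i) to replace $\nabla^2 X(t)$ by $\sqrt{2C''}\,\widetilde{M}$ with $\widetilde{M}$ a $\GOI((1+\eta^2)/2)$ matrix, and pass to its ordered eigenvalues $\la_1 \le \cdots \le \la_N$. Then $|\text{det}(\nabla^2 X(t))| = (2C'')^{N/2}\prod_j|\la_j|$, the index-$i$ event becomes $\{\la_i < 0 < \la_{i+1}\}$, and ${\rm Tr}(\nabla^2 X(t)) = \sqrt{2C''}\sum_j\la_j$, so the trace constraint reads $\sqrt{2C''}\sum_j\la_j \le -NC'u$. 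The prefactor collapses via $(2C'')^{N/2}/[(2\pi)^{N/2}(C')^{N/2}] = \pi^{-N/2}(C''/C')^{N/2} = \pi^{-N/2}\eta^{-N}$, using $\eta^2 = C'/C''$; dividing the constraint by $N\sqrt{2C''}$ turns it into $\frac{1}{N}\sum_j\la_j \le -\frac{C'}{\sqrt{2C''}}u$. Finally, since $(C'/\sqrt{2C''})^2 = \kappa^2/2$ and the boundary condition $({\bf B'})$ gives $\kappa^2 = \eta^2 + (N+2)/N$, I would rewrite $C'/\sqrt{2C''} = \sqrt{(N+2+N\eta^2)/(2N)}$, producing the stated formula.

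The one genuine obstacle --- and the reason the nondegenerate proof cannot simply be reused --- is that conditioning on $X(t) = x$ and invoking Lemma \ref{Lem:GOE for det Hessian sphere}(ii) fails here: on the boundary the conditional GOI parameter equals $(1+\eta^2-\kappa^2)/2 = -1/N$, so by Lemma \ref{lemma:GOI-c} the conditional Hessian is degenerate and the eigenvalue density of Lemma \ref{lemma:GOI} breaks down (the factor $\sqrt{1+Nc}$ vanishes). Trading the conditioning on $X(t)$ for the unconditional trace constraint via \eqref{Eq: degenerate-sphere} is exactly what circumvents this degeneracy; everything downstream is the same constant bookkeeping as in the Euclidean case.
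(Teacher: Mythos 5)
Your proposal is correct and follows essentially the same route as the paper's own proof: invoke the Kac--Rice metatheorem with the field indicator retained, use the Helmholtz relation \eqref{Eq: degenerate-sphere} (with $C'>0$) to convert $\{X(t)\ge u\}$ into the trace constraint on the Hessian, and then pass to the unconditional $\GOI((1+\eta^2)/2)$ eigenvalue representation with the same constant bookkeeping. Your added remarks on why the conditional argument fails at $c=-1/N$ and why $\E[\mu_i(X)]$ carries over unchanged are accurate but not needed beyond what the paper states.
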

\begin{proof} \ We only need to prove the second part since $\E[\mu_i(X)]$ follows directly from Theorem \ref{Thm:critical-points-nondegenerate-sphere} and $({\bf B'})$. By the Kac-Rice metatheorem, condition $({\bf B'})$ and Lemmas \ref{Lem:joint distribution sphere} and \ref{Lem:GOE for det Hessian sphere},
\begin{equation*}
\begin{split}
\E&[\mu_i(X, u)]  \\
&=\frac{1}{(2\pi)^{N/2} (C')^{N/2}} \E[ |\text{det} (\nabla^2 X(t))|\mathbbm{1}_{\{\text{index} (\nabla^2 X(t)) = i\}}\mathbbm{1}_{\{X(t)\ge u\}}]\\
&=\left(\frac{C''}{\pi C'}\right)^{N/2} \E_{\GOI((1+\eta^2)/2)}^N \left[\left(\prod_{j=1}^N|\la_j| \right) \mathbbm{1}_{\{\la_i<0<\la_{i+1}\}}\mathbbm{1}_{\{\sqrt{2C''}\sum_{j=1}^N\la_j\le -NC'u\}}\right]\\
&=\frac{1}{\pi^{N/2}\eta^N} \E_{\GOI((1+\eta^2)/2)}^N \left[\left(\prod_{j=1}^N|\la_j| \right) \mathbbm{1}_{\{\la_i<0<\la_{i+1}\}}\mathbbm{1}_{\{\sum_{j=1}^N\la_j/N\le -\sqrt{(N+2+N\eta^2)/(2N)} u\}}\right],
\end{split}
\end{equation*}
where the last line is due to definitions of $\eta$ and $\kappa$ and that $\kappa=\sqrt{(N+2+N\eta^2)/N}$ under condition $({\bf B'})$.
\end{proof}

The following result is an immediate consequence of \eqref{Eq:F-ratio} and Theorem \ref{Thm:critical-points-degenerate-sphere}.
\begin{corollary}\label{Cor:HeightDist-sphere-Degenerate} Let the assumptions in Theorem \ref{Thm:critical-points-degenerate-sphere} hold. Then for $i=0, \ldots, N$,
\begin{equation*}
\begin{split}
F_i(u)  &=\frac{\E_{\GOI((1+\eta^2)/2)}^N \left[\prod_{j=1}^N|\la_j|\mathbbm{1}_{\{\la_i<0<\la_{i+1}\}}\mathbbm{1}_{\{\sum_{j=1}^N\la_j/N\le -\sqrt{(N+2+N\eta^2)/(2N)} u\}}\right]}{\E_{\GOI((1+\eta^2)/2)}^N \left[\prod_{j=1}^N|\la_j|\mathbbm{1}_{\{\la_i<0<\la_{i+1}\}}\right] }.
\end{split}
\end{equation*}
\end{corollary}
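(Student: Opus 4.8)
The plan is to invoke the ratio representation of the height distribution together with the two expectation formulae already in hand. I would begin from \eqref{Eq:F-ratio}, which expresses $F_i(u) = \E[\mu_i(X,u)]/\E[\mu_i(X)]$. This identity holds for general critical points of index $i$ by the arguments carried over from \cite{CS15}, and it remains available under the boundary condition $({\bf B'})$ exactly as in the non-boundary case, since it is a purely measure-theoretic consequence of the definition \eqref{Eq:F} and does not rely on the nondegeneracy of the conditional Hessian that fails at the boundary.

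Next I would substitute the two explicit expressions furnished by Theorem \ref{Thm:critical-points-degenerate-sphere}. The denominator $\E[\mu_i(X)]$ is given by \eqref{Eq:critical-sphere} as the constant $\frac{1}{\pi^{N/2}\eta^N}$ times the GOI expectation $\E_{\GOI((1+\eta^2)/2)}^N[\prod_{j=1}^N|\la_j|\mathbbm{1}_{\{\la_i<0<\la_{i+1}\}}]$, while the numerator $\E[\mu_i(X,u)]$ is the very same prefactor $\frac{1}{\pi^{N/2}\eta^N}$ times the corresponding expectation under the same $\GOI((1+\eta^2)/2)$ density, now carrying the additional trace-constraint indicator $\mathbbm{1}_{\{\sum_{j=1}^N \la_j/N \le -\sqrt{(N+2+N\eta^2)/(2N)}\,u\}}$ that arises from the Helmholtz relation \eqref{Eq: degenerate-sphere}.

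The final step is the cancellation: the common normalizing factor $\frac{1}{\pi^{N/2}\eta^N}$ divides out of the quotient, so that all dependence on $\eta$ through that prefactor disappears, leaving precisely the stated ratio of two $\GOI((1+\eta^2)/2)$ expectations taken over the same eigenvalue density. There is no genuine obstacle here, as the corollary is an \emph{immediate} consequence of the theorem; the only points worth confirming are that the ratio identity \eqref{Eq:F-ratio} is still valid at the boundary and that the numerator and denominator share an identical normalizing constant, both of which are transparent from the statement of Theorem \ref{Thm:critical-points-degenerate-sphere} and equation \eqref{Eq:critical-sphere}.
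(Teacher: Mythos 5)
Your proposal is correct and follows exactly the paper's route: the paper likewise presents this corollary as an immediate consequence of \eqref{Eq:F-ratio} combined with Theorem \ref{Thm:critical-points-degenerate-sphere}, with the common prefactor $\frac{1}{\pi^{N/2}\eta^N}$ cancelling in the ratio. Your added remark that the ratio identity \eqref{Eq:F-ratio} remains valid at the boundary is a reasonable point of care, though the paper treats it as already established.
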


\begin{example}\label{Example:critical-points-degenerate-sphere} \ Let the assumptions in Theorem \ref{Thm:critical-points-degenerate-sphere} hold and let $N=2$, implying $\kappa^2-\eta^2=2$. Then
\begin{equation*}
\E[\mu_2(X)] = \E[\mu_0(X)] = \frac{1}{4\pi} + \frac{1}{2\pi \eta^2\sqrt{3+\eta^2}}
\end{equation*}
and
\begin{equation*}
\E[\mu_1(X)] = \frac{1}{\pi \eta^2\sqrt{3+\eta^2}}.
\end{equation*}
Moreover,
\begin{equation*}
\begin{split}
h_2(x)&=\frac{2\sqrt{3+\eta^2}}{\sqrt{2\pi}\left(2+\eta^2\sqrt{3+\eta^2}\right)}\left([(\eta^2+2)x^2-2]e^{-\frac{x^2}{2}}+ 2e^{-\frac{(3+\eta^2)x^2}{2}} \right) \mathbbm{1}_{\{x\ge 0\}},\\
h_0(x)&=\frac{2\sqrt{3+\eta^2}}{\sqrt{2\pi}\left(2+\eta^2\sqrt{3+\eta^2}\right)}\left([(\eta^2+2)x^2-2]e^{-\frac{x^2}{2}}+ 2e^{-\frac{(3+\eta^2)x^2}{2}} \right) \mathbbm{1}_{\{x\le 0\}},\\
h_1(x)&=\frac{\sqrt{3+\eta^2}}{\sqrt{2\pi}}e^{-\frac{(3+\eta^2)x^2}{2}};
\end{split}
\end{equation*}
see Figure \ref{fig:HeightDensity}. The expected number of critical points $\E[\mu_i(X, u)]$ for each $i=0,1,2$ can be obtained from the densities in the same way as in Example \ref{Example:critical-points-nondegenerate-sphere}.
\end{example}

It can be seen that the densities of height distributions of critical points in Example \ref{Example:critical-points-degenerate-sphere} are the limits of those in Example \ref{Example:critical-points-nondegenerate-sphere} when $\kappa^2-\eta^2\uparrow 2$. Again, this is due to the continuity of the expectations with respect to $\eta$ and $\kappa$.

\section{Discussion}\label{sec:discussion}

In this paper, we have used GOI matrices to model the Hessian of smooth isotropic random fields, providing a tool for computing the expected number of critical points via the Kac-Rice formula. Some potential extensions are in sight.

We see from \eqref{Eq:mu-h} that $\E[\mu_i(X,u)]$ can be computed by integrating $h_i(x)$. As an extension, if one considers the expected number of critical points of $X$ with height within certain interval $I$, then such expectation can be computed similarly to \eqref{Eq:mu-h} with the integral domain $(u,\infty)$ replaced by $I$.

There are several interesting properties of the height density $h_i(x)$ that remain to be discovered. When $N$ is even, we conjecture that $h_{N/2}(x)$ may be an exact Gaussian density. It would be interesting to study the mean, variance and mode of $h_i(x)$ and to compare $h_i(x)$ and $h_j(x)$ for $i \ne j$. Due to applications, it is very useful to investigate general explicit formulas for the expected number and height density of critical points of Gaussian fields on any dimension $N$. 

The expected number and height distribution of critical points of stationary non-isotropic Gaussian random fields remain unknown in general. We think that the problem can be solved by investigating more general Gaussian random matrices beyond GOI and making connections between them and the Hessian of the field. This would be our major future work.

\section{Appendix}
The following result shows that when $c>0$, the integral for $\GOI(c)$ in \eqref{Eq:GOI-GOE}, which will be useful for deriving $\E[\mu_i(X,u)]$, can be written in terms of an expectation of GOE of size $N+1$. This result has been applied in Sections \ref{sec:restriction-Euclidean} and \ref{sec:restriction-sphere} to show that under Fyodorov's restriction, our general formulae of $\E[\mu_i(X,u)]$ obtained in Theorems \ref{Thm:critical-points-nondegenerate-euclidean} and \ref{Thm:critical-points-nondegenerate-sphere} are exactly the same as the existing results in \cite{Fyodorov04,AuffingerPhD}.
\begin{lemma}\label{Lem:c-positive}
	Let $M$ be an $N\times N$ nondegenerate $\GOI(c)$ matrix and let $a\in \R$. If $c>0$, then for any $i_0=0, \ldots, N$,
	\begin{equation}\label{Eq:GOI-GOE}
	\begin{split}
	\E[|{\rm det}(M-aI_N)|\mathbbm{1}_{\{\text{\rm index} (M-aI_N) = i_0\}}]= \frac{\Gamma\left(\frac{N+1}{2}\right)}{\sqrt{\pi c}} \E_{\rm GOE}^{N+1}\left\{\exp\left[\frac{1}{2}\la_{i_0+1}^2 - \frac{1}{2c} (\la_{i_0+1}-a)^2\right]\right\}.
	\end{split}
	\end{equation}
\end{lemma}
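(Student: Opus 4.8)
The plan is to exploit the representation \eqref{Eq:c-positive}, which is available precisely because $c>0$: write $M = H + \sqrt{c}\,\xi I_N$ with $H$ a GOE matrix of size $N$ and $\xi$ an independent standard Gaussian. Then $M - aI_N = H + (\sqrt{c}\,\xi - a)I_N$, so if $\la_1\le\cdots\le\la_N$ denote the ordered eigenvalues of $H$, the eigenvalues of $M-aI_N$ are $\la_j + \sqrt{c}\,\xi - a$. Since the common shift preserves order, $|{\rm det}(M-aI_N)| = \prod_{j=1}^N|\la_j + \sqrt{c}\,\xi - a|$, and the event $\{\text{index}(M-aI_N)=i_0\}$ is exactly $\{\la_{i_0} < a - \sqrt{c}\,\xi < \la_{i_0+1}\}$, with the usual conventions $\la_0=-\infty$ and $\la_{N+1}=+\infty$.

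First I would write the left-hand side of \eqref{Eq:GOI-GOE} as an iterated integral, using the GOE eigenvalue density $f_0$ from \eqref{Eq:GOE density} for $(\la_1,\ldots,\la_N)$ and $\phi$ for $\xi$:
\[
\E[\cdots] = \int_\R \phi(\xi)\int_{\la_1\le\cdots\le\la_N}\prod_{j=1}^N|\la_j + \sqrt{c}\,\xi - a|\,\mathbbm{1}_{\{\la_{i_0}<a-\sqrt{c}\xi<\la_{i_0+1}\}}\,f_0(\la)\,d\la\,d\xi.
\]
Then I would substitute $t = a - \sqrt{c}\,\xi$, so that $\phi(\xi)\,d\xi = (2\pi c)^{-1/2}\exp\{-(t-a)^2/(2c)\}\,dt$ and $\prod_j|\la_j+\sqrt{c}\xi-a| = \prod_j|\la_j-t|$. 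The crucial observation is that the Vandermonde factor $\prod_{i<j}|\la_i-\la_j|$ inside $f_0$ times $\prod_j|\la_j - t|$ is exactly the full Vandermonde $\prod_{1\le p<q\le N+1}|\nu_p-\nu_q|$ of the $N+1$ numbers $\{\la_1,\ldots,\la_N,t\}$, while the indicator $\mathbbm{1}_{\{\la_{i_0}<t<\la_{i_0+1}\}}$ forces $t$ to occupy the $(i_0+1)$-st slot in increasing order.

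This motivates the change of variables $\nu_k=\la_k$ for $k\le i_0$, $\nu_{i_0+1}=t$, and $\nu_k=\la_{k-1}$ for $k\ge i_0+2$, a volume-preserving bijection from $\{\la_1\le\cdots\le\la_{i_0}<t<\la_{i_0+1}\le\cdots\le\la_N\}$ onto the full ordered simplex $\{\nu_1\le\cdots\le\nu_{N+1}\}$. Under it the Gaussian exponents combine, using $\sum_{j}\la_j^2 = \sum_k\nu_k^2 - \nu_{i_0+1}^2$, as
\[
-\tfrac12\sum_{j=1}^N\la_j^2 - \frac{(t-a)^2}{2c} = -\tfrac12\sum_{k=1}^{N+1}\nu_k^2 + \tfrac12\nu_{i_0+1}^2 - \frac{(\nu_{i_0+1}-a)^2}{2c}.
\]
Factoring out the GOE density of size $N+1$ turns the integral into $\frac{K_{N+1}}{\sqrt{2\pi c}\,K_N}\,\E_{\rm GOE}^{N+1}\{\exp[\tfrac12\la_{i_0+1}^2 - \tfrac1{2c}(\la_{i_0+1}-a)^2]\}$, and I would close the argument with the identity $K_{N+1}/K_N = \sqrt{2}\,\Gamma((N+1)/2)$ from \eqref{Eq:normalization constant}, which collapses the prefactor to $\Gamma((N+1)/2)/\sqrt{\pi c}$, exactly as claimed.

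The hard part will be the bookkeeping around this change of variables: checking that the map is a measure-preserving bijection onto the whole ordered simplex (so that no region is omitted or double-counted), that the absolute-value Vandermonde assembles with the correct signs, and—most delicately—that the extra factor $\tfrac12\nu_{i_0+1}^2$ emerges precisely, since it is this term that attaches the determinant's shift to the $(i_0+1)$-st eigenvalue of the enlarged ensemble and makes the final GOE$_{N+1}$ expectation come out in the stated form.
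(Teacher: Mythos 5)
Your argument is correct and arrives at the same destination as the paper's proof — an $(N+1)$-dimensional GOE expectation obtained by slotting the shift variable into the $(i_0+1)$-st position of the ordered eigenvalues — but it gets to the decoupled ``GOE $\times$ independent Gaussian'' form by a different and arguably cleaner route. The paper works at the level of eigenvalue densities: it starts from the GOI density of Lemma \ref{lemma:GOI}, multiplies by an auxiliary standard Gaussian integral in a dummy variable $\la_*$, and then performs an explicit linear change of variables (with Jacobian $\sqrt{(1+Nc)/c}$) designed to cancel the correlation term $\frac{c}{2(1+Nc)}(\sum\la_i)^2$ in the exponent. You instead invoke the matrix-level representation \eqref{Eq:c-positive}, $M=H+\sqrt{c}\,\xi I_N$, which is exactly what that change of variables is secretly undoing; the decoupling is then automatic and the only substitution needed is the one-dimensional $t=a-\sqrt{c}\,\xi$. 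From that point on the two proofs coincide: the indicator forces $t$ between $\la_{i_0}$ and $\la_{i_0+1}$, the product $\prod_j|\la_j-t|$ completes the Vandermonde of size $N+1$, the identity $\sum_j\la_j^2=\sum_k\nu_k^2-\nu_{i_0+1}^2$ produces the $+\tfrac12\la_{i_0+1}^2$ correction, and $K_{N+1}/K_N=\sqrt{2}\,\Gamma(\frac{N+1}{2})$ gives the prefactor $\Gamma(\frac{N+1}{2})/\sqrt{\pi c}$. What your version buys is that it makes transparent \emph{why} the hypothesis $c>0$ is needed (it is precisely the condition for the representation \eqref{Eq:c-positive}) and it avoids ever writing down the correlated GOI density; what the paper's version buys is that its density-level manipulation is the template that generalizes to situations where no matrix representation like \eqref{Eq:c-positive} is available. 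The bookkeeping concerns you flag at the end are all benign: the relabeling is a measure-preserving bijection onto the ordered simplex up to a null set, the absolute values make sign-tracking in the Vandermonde a non-issue, and the $\tfrac12\nu_{i_0+1}^2$ term falls out of the identity above exactly as you state.
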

\begin{proof}\
	By Lemma \ref{lemma:GOI},
	\begin{equation}\label{Eq:Int-GOI-GOE}
	\begin{split}
	&\quad \E[|{\rm det}(M-aI_N)|\mathbbm{1}_{\{\text{index} (M-aI_N) = i_0\}}]\\
	&= \int_{\R^N}\left(\prod_{i=1}^N |\la_i-a|\right)\mathbbm{1}_{\{\la_{i_0}<a<\la_{i_0+1}\}}f_c(\la_1,\ldots,\la_N)d\la_1\cdots d\la_N\\
	&= \frac{1}{2^{N/2}\sqrt{1+Nc}\prod_{i=1}^N\Gamma\left(\frac{i}{2}\right)} \int_{\R^N}d\la_1\cdots d\la_N \left(\prod_{i=1}^N |\la_i-a|\right)\mathbbm{1}_{\{\la_{i_0}<a<\la_{i_0+1}\}} \\
	&\quad \times \exp\left\{ -\frac{1}{2}\sum_{i=1}^N \la_i^2 + \frac{c}{2(1+Nc)} \left(\sum_{i=1}^N \la_i\right)^2\right\}
	\prod_{1\le i<j \le N} |\la_i - \la_j| \mathbbm{1}_{\{\la_1\leq\ldots\leq\la_N\}}.
	\end{split}
	\end{equation}
	By introducing an additional integral of standard Gaussian variable, we can write \eqref{Eq:Int-GOI-GOE} as
	\begin{equation*}
	\begin{split}
	&\frac{1}{\sqrt{2\pi}2^{N/2}\sqrt{1+Nc}\prod_{i=1}^N\Gamma\left(\frac{i}{2}\right)} \int_{\R^{N+1}}\left(\prod_{i=1}^N |\la_i-a|\right) \exp\left\{ -\frac{1}{2}\sum_{i=1}^N \la_i^2 + \frac{c}{2(1+Nc)} \left(\sum_{i=1}^N \la_i\right)^2\right\}\\
	&\times \exp\left\{-\frac{\la_*^2}{2}\right\} \prod_{1\le i<j \le N} |\la_i - \la_j| \mathbbm{1}_{\{\la_0\le \la_1\leq \ldots \le \la_{i_0}<a<\la_{i_0+1}\le \ldots\leq\la_N\le \la_{N+1}\}}d\la_1\cdots d\la_Nd\la_*,
	\end{split}
	\end{equation*}
	where $\la_0=-\infty$ and $\la_{N+1}=\infty$. Make the following change of variables:
	\begin{equation*}
	\begin{split}
	\la_i &= \tilde{\la}_i - \tilde{\la}_*, \quad \forall i = 1, \ldots, N,\\
	\la_* &= \sqrt{\frac{c}{1+Nc}}\sum_{i=1}^N \tilde{\la}_i + \frac{1}{\sqrt{c(1+Nc)}} \tilde{\la}_*.
	\end{split}
	\end{equation*}
	Notice that the Jacobian of such change of variables is $\sqrt{(1+Nc)/c}$.
	Therefore, \eqref{Eq:Int-GOI-GOE} becomes
	\begin{equation*}
	\begin{split}
	&\frac{1}{\sqrt{2\pi}2^{N/2}\sqrt{c}\prod_{i=1}^N\Gamma\left(\frac{i}{2}\right)} \int_{\R^{N+1}}\left(\prod_{i=1}^N |\tilde{\la}_i - \tilde{\la}_*-a|\right) \exp\left\{ -\frac{1}{2}\sum_{i=1}^N \tilde{\la}_i^2 - \frac{1}{2c} \tilde{\la}_*^2\right\}\\
	&\quad \times \prod_{1\le i<j \le N} |\tilde{\la}_i - \tilde{\la}_j| \mathbbm{1}_{\{\tilde{\la}_0\le \tilde{\la}_1\leq \ldots \le \tilde{\la}_{i_0}<\tilde{\la}_*+a<\tilde{\la}_{i_0+1}\le \ldots\leq\tilde{\la}_N\le \tilde{\la}_{N+1}\}}d\tilde{\la}_1\cdots d\tilde{\la}_Nd\tilde{\la}_*,
	\end{split}
	\end{equation*}
	where $\tilde{\la}_0=-\infty$ and $\tilde{\la}_{N+1}=\infty$. Make the following change of variables:
	\begin{equation*}
	\begin{split}
	\la_i &= \tilde{\la}_i, \quad \forall i = 1, \ldots, i_0,\\
	\la_{i_0+1} &= \tilde{\la}_*+a,\\
	\la_{i+1} &= \tilde{\la}_i, \quad \forall i = i_0+1, \ldots, N.
	\end{split}
	\end{equation*}
	Then \eqref{Eq:Int-GOI-GOE} becomes
	\begin{equation*}
	\begin{split}
	&\quad \frac{1}{\sqrt{2\pi}2^{N/2}\sqrt{c}\prod_{i=1}^N\Gamma\left(\frac{i}{2}\right)} \int_{\R^{N+1}}\exp\left\{\frac{1}{2}\la_{i_0+1}^2 - \frac{1}{2c} (\la_{i_0+1}-a)^2\right\}\exp\left\{ -\frac{1}{2}\sum_{i=1}^{N+1} \la_i^2 \right\}\\
	&\quad \times \prod_{1\le i<j \le N+1} |\la_i - \la_j| \mathbbm{1}_{\{\la_1\leq \ldots \leq\la_N\le \la_{N+1}\}}d\la_1\cdots d\la_Nd\la_{N+1}\\
	& = \frac{\Gamma\left(\frac{N+1}{2}\right)}{\sqrt{\pi c}} \E_{\rm GOE}^{N+1}\left\{\exp\left[\frac{1}{2}\la_{i_0+1}^2 - \frac{1}{2c} (\la_{i_0+1}-a)^2\right]\right\}.
	\end{split}
	\end{equation*}
\end{proof}

\section*{Acknowledgments}
The first author thanks Yan Fyodorov for stimulating discussions on random matrices theory.

\bibliographystyle{plain}

\begin{small}

\end{small}

\bigskip

\begin{quote}
\begin{small}

\textsc{Dan Cheng}\\
Department of Mathematics and Statistics \\
Texas Tech University\\
1108 Memorial Circle\\
Lubbock, TX 79409, U.S.A.\\
E-mail: \texttt{cheng.stats@gmail.com}

\vspace{.1in}
		
\textsc{Armin Schwartzman}\\
Division of Biostatistics and Bioinformatics \\
University of California San Diego\\
9500 Gilman Dr.\\
La Jolla, CA 92093, U.S.A.\\
E-mail: \texttt{armins@ucsd.edu}

	\end{small}
\end{quote}

\end{document}